\documentclass[11pt]{amsart}

\usepackage{amsmath,amsthm,amssymb,amsfonts,color}

\usepackage{ stmaryrd }
\usepackage{ mathtools }
\usepackage{ nicefrac }

\usepackage{enumitem}
\newlist{myitemize}{itemize}{3}
\setlist[myitemize,1]{label=,leftmargin=0.2in}
\setlist[myitemize,2]{label=$\rightarrow$,leftmargin=1em}
\setlist[myitemize,3]{label=$\diamond$}
\parskip 2pt

\newcommand{\R}{\mathbb{R}}

\newcommand{\p}{ \partial  }
\DeclareMathOperator{\diam}{diam}


\theoremstyle{plain}
\newtheorem{defi}{Definition}[section]
\newtheorem{prop}[defi]{Proposition}
\newtheorem{teo}[defi]{Theorem}
\newtheorem{cor}[defi]{Corollary}
\newtheorem{lema}[defi]{Lemma}

\newtheorem{remark}[defi]{Remark}

\theoremstyle{definition}

\theoremstyle{remark}

\numberwithin{equation}{section}

\begin{document}

\title[Optimal mass transport for eigenvalue problems.]{An optimal mass transport approach for limits of eigenvalue problems for the fractional $p$-Laplacian.}

\author[L. M. Del Pezzo]{L. M. Del Pezzo}

\address[L. M. Del Pezzo, J. D. Rossi, N. Saintier,  A. Salort]{\newline
\noindent CONICET and Dep. de Matem\'{a}tica, FCEyN, 
Universidad de Buenos Aires, Ciudad Universitaria, Pab 1 (1428), 
Buenos Aires, Argentina.}
\address{\newline
N. Saintier: Instituto de Ciencias - Univ. Nac. Gral Sarmiento, J. M. Gutierrez 1150, C.P. 1613 Los Polvorines - Pcia de Bs. As. - Argentina }
\email{\newline
ldpezzo@dm.uba.ar \newline
jrossi@dm.uba.ar \newline
nsaintier@dm.uba.ar \newline
asalort@dm.uba.ar}
\urladdr{\newline
http://cms.dm.uba.ar/Members/ldpezzo\newline
 http://mate.dm.uba.ar/$\thicksim$jrossi/ }

\author[J. D. Rossi]{J. D. Rossi}


\author[N. Saintier]{N. Saintier}


\author[A. Salort]{A. Salort}


\keywords{Fractional $p-$Laplacian, eigenvalues, mass transport.\\
\indent 2010 {\it Mathematics Subject Classification: 35J92, 35P30, 45C05.   }}


\begin{abstract}
	We find interpretation using optimal mass transport theory for 
	eigenvalue problems 
	obtained as limits of the eigenvalue problems for the fractional 
	$p-$Laplacian operators as $p\to +\infty$. 
	We deal both with Dirichlet and Neumann boundary conditions.
\end{abstract}

\date{\today}

\maketitle

\section{Introduction}
	Our main goal in this paper is to use tools from mass transport 
	theory to study eigenvalue problems that are obtained 
	taking limits as $p\to +\infty$ in eigenvalue problems that 
	involve fractional spaces $W^{s,p}$ (with $0<s<1$ and $1<p\leq+\infty$).
	We deal both with Dirichlet and Neumann boundary conditions.
	
	Along this paper we take $U$ a smooth bounded domain in $\R^n$, 
	$1<p<+\infty$ and $0<s<1$. We also fix $d(\cdot,\cdot)$ a 
	distance in $\R^n$ equivalent to the Euclidean one.
	
	\medskip
	
	Let $\lambda_{s,p}^D$ 
	be the first eigenvalue of the fractional $p$-Laplacian of order 
	$s$ in	$U$   with Dirichlet boundary conditions, 
	that is, let us consider
	$$
		\lambda^D_{s,p}
		\coloneqq\inf\,\left
		\{ [u]^p_{s,p}: u\in \widetilde{W}^{s,p}(U), \int_U |u|^p
		\,dx =1 \right\}
	$$
	where 
	$$
		[u]^p_{s,p}\coloneqq
		 \int_{\R^n}\int_{\R^n} \frac{|u(x)-u(y)|^p}{|x-y|^{n+sp}}
			\,dxdy
	$$ 
	is the semi-norm of $W^{s,p}(\R^n)$ and 
	$$
		\widetilde{W}^{s,p}(U) \coloneqq
		\left\{u \in W^{s,p}(\R^n) \, : \, u=0 \mbox{ in } \R^n 
		\setminus U \right\}.
	$$

	For this problem Lindgren and Lindqvist in \cite{LL} proved that 
	$$
		\Lambda^D_{s,\infty}\coloneqq
		\lim_{p\to + \infty}\left(\lambda_{s,p}^D\right)^{1/p}  
		= \frac{1}{R^s} 
	$$
	where 
	\[
		R\coloneqq\max_{x\in \overline{U}} \mbox{dist}(x,\p U) = 
		\max_{x\in \overline{U}} \min_{y\in 
		\partial U} |x-y|.
	\]
	
	Moreover, via a subsequence, the eigenfunctions $u_p$ suitable normalized 
	(a minimizer for $\lambda_{s,p}^D$)  converge uniformly to a minimizer for
	 $\Lambda_{s,\infty}^D.$

	Our first purpose in this work is to relate $\Lambda^D_{s,\infty}$ 
	to an optimal mass transport problem with cost 
	function $c(x,y)=|x-y|^s.$ We prove the following result:

	\begin{teo} \label{teo.Dir.Intro} 
		There holds that
			$$
				\frac{1}{\Lambda_{s,\infty}^D} 
 				= \sup_{\mu\in P(\overline{U})} W_s(\mu,P(\p U)) ,
 			$$
			where 
		\[
			W_s(\mu,\nu)\coloneqq\inf_{\pi} \int_U\int_{U} |x-y|^s\,d
			\pi (x,y).
		\] 
		Here $P(A)$ is the set of probability measures on $A$  and 
		$\pi\in P(\overline{U}\times \overline{U}) $ is a measure with
		marginals $\mu$ and $\nu$.
	\end{teo}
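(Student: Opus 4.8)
My plan is to use the Lindgren--Lindqvist identity $\Lambda^D_{s,\infty}=R^{-s}$ recalled above, so that the assertion reduces to the purely metric equality
\[
\sup_{\mu\in P(\overline{U})} W_s\bigl(\mu,P(\partial U)\bigr)=R^s,
\qquad
W_s\bigl(\mu,P(\partial U)\bigr)\coloneqq\inf_{\nu\in P(\partial U)}W_s(\mu,\nu),
\]
which I would then establish by proving the two inequalities separately. The argument is clean because, for $0<s\le1$, the subadditivity of $t\mapsto t^s$ makes the cost $c(x,y)=|x-y|^s$ itself a distance on $\R^n$.

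For ``$\ge$'' I would test against a Dirac mass. Since $x\mapsto\operatorname{dist}(x,\partial U)$ is continuous on the compact set $\overline{U}$, there is $x_0\in\overline{U}$ with $\operatorname{dist}(x_0,\partial U)=R$; take $\mu=\delta_{x_0}$. For any $\nu\in P(\partial U)$ the unique admissible plan is the product $\delta_{x_0}\otimes\nu$, so $W_s(\delta_{x_0},\nu)=\int_{\partial U}|x_0-y|^s\,d\nu(y)\ge R^s$ because $|x_0-y|\ge R$ on $\partial U$. Taking the infimum in $\nu$ and then the supremum in $\mu$ yields the bound (and incidentally shows the supremum is attained, at a Dirac mass).

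For ``$\le$'' I would, given an arbitrary $\mu\in P(\overline{U})$, transport it onto $\partial U$ along the nearest-point projection. Let $T\colon\overline{U}\to\partial U$ be a Borel selection of the metric projection $x\mapsto\{y\in\partial U:|x-y|=\operatorname{dist}(x,\partial U)\}$; such a selection exists since this multifunction is measurable with nonempty compact values ($\partial U$ being compact), by the Kuratowski--Ryll-Nardzewski theorem. Put $\nu\coloneqq T_\#\mu\in P(\partial U)$ and $\pi\coloneqq(\mathrm{id}\times T)_\#\mu$, which has marginals $\mu$ and $\nu$; then
\[
W_s(\mu,\nu)\le\int_{\overline{U}}\int_{\overline{U}}|x-y|^s\,d\pi(x,y)
=\int_{\overline{U}}\operatorname{dist}(x,\partial U)^s\,d\mu(x)\le R^s .
\]
Hence $W_s(\mu,P(\partial U))\le R^s$ for every $\mu$, so the supremum is $\le R^s$; combined with the previous step this proves the theorem.

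The only point requiring genuine care is the Borel measurability of the selection $T$, since the metric projection onto the closed set $\partial U$ may be multivalued and discontinuous; this is dispatched by the standard measurable-selection theorem. Alternatively, one can bypass the selection entirely through Kantorovich--Rubinstein duality: as $|x-y|^s$ is a metric one has $W_s(\mu,\nu)=\sup\{\int\varphi\,d(\mu-\nu):[\varphi]_{s,\infty}\le1\}$, where $[\varphi]_{s,\infty}\coloneqq\sup_{x\ne y}|\varphi(x)-\varphi(y)|/|x-y|^s$; a Sion minimax exchange (using weak-$*$ compactness of $P(\partial U)$) gives $\sup_\mu\inf_\nu W_s(\mu,\nu)=\sup\{\max_{\overline{U}}\varphi-\max_{\partial U}\varphi:[\varphi]_{s,\infty}\le1\}$, and the extremal choice $\varphi(x)=(R^s-|x-x_0|^s)_+$ --- which satisfies $[\varphi]_{s,\infty}\le1$ precisely because $|x-y|^s$ obeys the triangle inequality --- makes this last supremum equal to $R^s$.
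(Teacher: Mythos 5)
Your proof is correct, but it takes a genuinely different route from the one in the paper. You take the Lindgren--Lindqvist identity $\Lambda^D_{s,\infty}=R^{-s}$ as a black box and reduce the theorem to the purely metric equality $\sup_{\mu}W_s(\mu,P(\partial U))=R^s$, which you then prove in two lines: testing with $\delta_{x_0}$ for the lower bound, and pushing forward along (a measurable selection of) the nearest-point projection for the upper bound, noting that $W_s(\mu,T_{\#}\mu)\le\int\operatorname{dist}(x,\partial U)^s\,d\mu\le R^s$. Your alternative route through Kantorovich--Rubinstein duality and Sion's minimax theorem, using $\varphi(x)=(R^s-|x-x_0|^s)_+$ as the extremal potential, is also sound; both devices correctly exploit that $|x-y|^s$ is itself a distance when $0<s\le1$. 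The paper instead introduces functionals $G_p,G_\infty$ on $C(\overline{U})\times M(\overline{U})$, proves that $G_p$ $\Gamma$-converges to $G_\infty$, shows that the pair $(f_p,u_p)=((u_p)^{p-1},u_p)$ built from the normalized $p$-eigenfunction minimizes $G_p$ with value $-1$, passes to the limit to obtain a minimizer $(f_\infty,u_\infty)$ of $G_\infty$, and then unwinds this minimization using the Kantorovich duality proved in Section~\ref{sect-MK}. Your argument is considerably shorter and more elementary, and it makes the value $R^s$ completely transparent. What the paper's approach buys, and yours does not, is the dynamical picture: it identifies the limit eigenfunction $u_\infty/\Lambda^D_{s,\infty}$ as a Kantorovich potential for the optimal transport problem, and the weak-$*$ limit $f_\infty$ of the normalized powers $(u_p)^{p-1}$ as an optimal (probability) source measure $\mu$. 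This links the asymptotics of the eigenfunctions, not merely of the eigenvalues, to the transport structure; that is the content the $\Gamma$-convergence machinery is designed to extract, and it is also what lets the paper treat the Neumann case in Section~\ref{sect-New} by the same template, where a bare metric identity would be harder to guess in advance.
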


	Note that $W_s(\mu,\nu)$ is the total cost when we have to transport the
	measure $\mu$ onto $\nu$ using as cost for transporting one unit of
	mass from position $x$ to position $y$ the Euclidean distance to the power 
	$s$, that is $|x-y|^s$. 
	We refer to \cite{Villani} and to Section \ref{sect-MK} for precise 
	definitions, notations and properties of optimal mass transport theory. 
	Hence, our result says that the eigenvalue $\Lambda_{s,\infty}^D$ is 
	related to the problem of finding a probability measure supported inside 
	$U$, $\mu$, that is 
	far (in terms of the transport cost) from the set of probability measures 
	supported on the boundary, $\partial U$. One easy solution to this problem 
	is the following: take $B_R(x_0)$ a ball with maximum radius $R$ inside $U$ 
	and let $y_0 \in \partial U \cap \partial B_R(x_0)$ 
	(there exists such $y_0$ due to the maximality of $R$). 
	Then, $\mu = \delta_{x_0}$ 
	(with $\nu = \delta_{y_0}$) solves $\sup_{\mu\in P(\overline{U})} 
	W_s(\mu,P(\p U))$. Observe that from Theorem \ref{teo.Dir.Intro} we 
	can recover that $\Lambda^D_{s,\infty}		= \nicefrac{1}{R^s}$.

	\medskip

	Now, let us turn our attention to the case of the first nontrivial 
	eigenvalue for Neumann boundary conditions, i.e., let us consider
 	\[
		\lambda^N_{s,p} \coloneqq 
		\inf\left\{\llbracket u\rrbracket^p_{s,p}\colon u\in \mathcal{C}
		\right\},
	\]
	where 
	\begin{equation}\label{DefLambdapsNeu}
	\llbracket u\rrbracket^p_{s,p}\coloneqq
		\int_U\int_U \frac{ |u(x)-u(y)|^p}{d(x,y)^{n+sp}}\,dxdy
	\end{equation}
	and
	\[
		\mathcal{C}\coloneqq\left\{u\in W^{s,p}(U)\colon
		\|u\|_{L^p(U)} =1, \, \int_U |u|^{p-2} u \,dx =0 \right\}.
	\]
	For this problem, in the case $d(x,y)=|x-y|,$ 
	Del Pezzo and Salort in \cite{DelpeSal} proved that
	$$
		\Lambda^N_{s,\infty} \coloneqq
		\lim_{p\to +\infty}\left(\lambda^N_{s,p}\right)^{1/p}  
		= \frac{2}{(\diam(U))^s} 
	$$
	where $\diam(U)$ is the extrinsic diameter, i.e.
	\[
		\diam(U) \coloneqq \max_{x,y\in\overline{U}} |x-y|.
	\]

	Their proof actually extends to the case in which we consider
	$\llbracket u\rrbracket^p_{s,p}$ with $d(x,y)$ any distance as above  
	(for instance, for the geodesic distance in $U$).
	In this case, it holds that
	\begin{equation}\label{eq:1ercaso}
		\Lambda_{s,\infty}^N
		= \lim_{p\to +\infty}\left(\lambda_{s,p}^N\right)^{1/p} 
		= \frac{2}{(\diam_d(U))^s}
	\end{equation}
	where $\diam_d(U)$ is the diameter of $U$ according to $d$, 
	that is 
	\[
		\diam_d(U) = \max_{x,y\in\overline{U}} d(x,y).
	\]
	Moreover, as happens for the Dirichlet problem, 
	via a subsequence the normalized eigenfunctions $u_p$ (a minimizer for 
	$\lambda_{s,p}^N$) converge uniformly to a minimizer for
	 $\Lambda_{s,\infty}^N.$
	 
	 In order to introduce the mass transport interpretation we need the 
	 following notations. We denote by $M(\overline{U})$ the space of finite 
	 Borel measures over 
	$\overline{U}$. Given 
	$\sigma\in M(\overline{U})$, we denote its positive and negative part by 
	$\sigma^+$ and $\sigma^-$ so that $\sigma=\sigma^+-\sigma^-$, and 
	$|\sigma|=\sigma^++\sigma^-$. Then we have,

	\begin{teo} \label{teo.Neu.Intro}
		There holds
		\begin{equation}\label{MK.Neu}
 			\frac{2}{\Lambda^N_{s,\infty}}
 				=  \max\left\{
 				W_s(\sigma^+,\sigma^-)\colon
 				{\sigma\in M(\overline{U}), \, 
 				\sigma^+(\overline{U})=\sigma^-(\overline{U})=1} 
 				 \right\}
		\end{equation}
		where $W_s$ is as in Theorem \ref{teo.Dir.Intro}.
	\end{teo}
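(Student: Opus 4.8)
The plan is to establish the identity by proving two inequalities, following the same philosophy as the Dirichlet case but using the zero-mean constraint in $\mathcal{C}$ to produce a signed measure rather than a transport problem between a measure and the boundary. First, I would obtain the limit functional. Let $u_p\in\mathcal{C}$ be a minimizer for $\lambda^N_{s,p}$; after normalization and passing to a subsequence, $u_p\to u_\infty$ uniformly on $\overline{U}$, and one checks (exactly as in \cite{DelpeSal}) that $u_\infty$ is a maximizer of the limit variational problem
\[
\Lambda^N_{s,\infty} = \min\left\{ \frac{\|u\|_\infty}{\operatorname{Lip}_s(u)} : \text{suitable admissible } u \right\}^{-1},
\]
so that $1/\Lambda^N_{s,\infty}$ equals the supremum over nonconstant Lipschitz-in-$d^s$ functions $u$ (with the appropriate normalization coming from the vanishing-mean condition in the limit) of the ratio of the oscillation of $u$ to its $d^s$-Lipschitz seminorm. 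I would record precisely this dual/variational description of the left-hand side before touching mass transport.

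Next, the heart of the argument is Kantorovich duality. For the upper bound $\frac{2}{\Lambda^N_{s,\infty}} \le \max\{\dots\}$: given a near-optimal admissible $u$ for the limit problem, I would use the structure of the limiting equation to locate points $x_0, y_0\in\overline{U}$ where $u$ attains (essentially) its max and min and where the ratio $\frac{u(x_0)-u(y_0)}{d(x_0,y_0)^s}$ is nearly extremal; then $\sigma = \delta_{x_0}-\delta_{y_0}$ is admissible for the right-hand side and $W_s(\sigma^+,\sigma^-) = |x_0-y_0|^s$ (note $W_s$ is defined with the Euclidean cost $|x-y|^s$, so here I must be careful: the relevant extremal configuration forces $d(x_0,y_0)=\diam_d(U)$ and in the Euclidean case $d=|\cdot|$ these coincide, giving $W_s(\delta_{x_0},\delta_{y_0}) = (\diam(U))^s = 2/\Lambda^N_{s,\infty}$; in the general-$d$ case one works directly with the cost $d$ throughout). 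For the reverse inequality $\frac{2}{\Lambda^N_{s,\infty}} \ge W_s(\sigma^+,\sigma^-)$ for every admissible $\sigma$: I would invoke Kantorovich–Rubinstein duality, writing
\[
W_s(\sigma^+,\sigma^-) = \sup\left\{ \int_{\overline U} \phi \, d\sigma : \phi(x)-\phi(y) \le |x-y|^s \ \forall x,y \right\},
\]
pick a near-optimal Kantorovich potential $\phi$, and use it as a test function: since $\phi$ is $d^s$-Lipschitz with seminorm $\le 1$ and $\int \phi\,d\sigma = \int\phi\,d\sigma^+ - \int\phi\,d\sigma^-$ with $\sigma^\pm$ probability measures, a suitable vertical shift of $\phi$ (and, if needed, an approximation argument to meet the nonlinear constraint $\int|u|^{p-2}u\,dx=0$ in the limit, which in the limit $p\to\infty$ degenerates to a condition only on where the max and min are attained) gives an admissible competitor for $1/\Lambda^N_{s,\infty}$ whose ratio is at least $\int\phi\,d\sigma \big/ 1$. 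Taking suprema over $\sigma$ and $\phi$ closes the loop.

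The main obstacle I anticipate is the precise handling of the Neumann constraint $\int_U |u|^{p-2}u\,dx = 0$ as $p\to\infty$ and its interplay with the requirement that $\sigma^+$ and $\sigma^-$ be probability measures (total mass exactly $1$). In the Dirichlet case the boundary condition $u=0$ on $\R^n\setminus U$ pins down one side of the transport cleanly; here the zero-mean condition is softer and only in the limit forces the maximizer to equioscillate, so I expect to need a careful limiting argument showing that admissible signed measures $\sigma$ with unit positive and negative mass are exactly the objects dual to limits of Neumann eigenfunctions — essentially that the constraint "$\int|u|^{p-2}u = 0$" passes to "$\mu_+(\overline U) = \mu_-(\overline U)$" for the measures $\mu_\pm$ concentrated on $\{u = \max u\}$ and $\{u=\min u\}$. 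A secondary technical point is justifying that the supremum on the right-hand side is attained (hence the $\max$), which should follow from weak-$*$ compactness of $\{\sigma : \sigma^\pm(\overline U)=1\}$ in $M(\overline U)$ together with lower semicontinuity of $\sigma\mapsto -W_s(\sigma^+,\sigma^-)$, or more directly from the explicit extremal $\sigma=\delta_{x_0}-\delta_{y_0}$ with $d(x_0,y_0)=\diam_d(U)$ exhibited in the upper-bound step.
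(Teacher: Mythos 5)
Your proposal is correct in spirit but takes a genuinely different route from the paper, and it is also somewhat more convoluted than necessary. The paper does not argue by two inequalities at all: it introduces the functionals $G_p, G_\infty\colon C(\overline{U})\times M(\overline{U})\to\R\cup\{+\infty\}$, proves $\Gamma$-convergence of $G_p$ to $G_\infty$ (following \cite{CDJ,RoSaint}), shows that the pair $(u_p, f_p)$ with $f_p=|u_p|^{p-2}u_p$ minimizes $G_p$ with value $-1$, passes to the limit to get a minimizing pair $(u_\infty, f_\infty)$ for $G_\infty$ with $f_\infty(\overline{U})=0$, $|f_\infty|(\overline{U})=1$, and finally rewrites the statement that $(u_\infty,f_\infty)$ minimizes $G_\infty$ as a double max over admissible pairs $(v,\sigma)$, using Kantorovich duality (Theorem \ref{Duality}) to collapse the inner max into $W_s(\sigma^+,\sigma^-)$. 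This is self-contained and does not require invoking the known value $\Lambda^N_{s,\infty}=2/(\diam_d(U))^s$ of \eqref{eq:1ercaso}; rather, it re-derives it along the way. By contrast you take \eqref{eq:1ercaso} as given and reduce everything to the scalar identity $\max W_s(\sigma^+,\sigma^-)=(\diam_d(U))^s$, which is a much more elementary calculation. That shortcut is legitimate and arguably clearer — and you correctly notice the notational slip in the paper (the formal definition of $W_s$ uses the Euclidean cost $|x-y|^s$ while the quotients use a general $d$, so strictly one should carry $d(x,y)^s$ through the transport cost as well).

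That said, your lower bound step is more complicated than it needs to be. You invoke Kantorovich--Rubinstein, pick a near-optimal potential $\phi$, and then speak of vertical shifts and an approximation argument to turn $\phi$ into an admissible competitor for the limit variational problem. None of that is needed once you have \eqref{eq:1ercaso}: for any $\sigma$ with $\sigma^\pm(\overline U)=1$ and any coupling $\pi$ of $\sigma^+,\sigma^-$, one has $\int d(x,y)^s\,d\pi\le(\diam_d(U))^s$ pointwise, hence $W_s(\sigma^+,\sigma^-)\le(\diam_d(U))^s=2/\Lambda^N_{s,\infty}$ with no duality at all; equivalently, if you insist on duality, $\int\phi\,d(\sigma^+-\sigma^-)\le\max\phi-\min\phi\le(\diam_d(U))^s$ for any $1$-Lipschitz-in-$d^s$ potential $\phi$, again immediately. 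The anticipated difficulty you describe — tracking the nonlinear constraint $\int|u|^{p-2}u\,dx=0$ into the limit and matching it to the normalization $\sigma^\pm(\overline U)=1$ — is exactly what the paper's $\Gamma$-convergence framework handles cleanly by passing $f_p=|u_p|^{p-2}u_p$ to a weak-$*$ limit $f_\infty$ satisfying $f_\infty(\overline U)=0$ and $|f_\infty|(\overline U)=1$; that is the main thing the $\Gamma$-convergence route buys over the elementary one. Your elementary route buys transparency at the cost of leaning on the pre-existing asymptotic \eqref{eq:1ercaso}.
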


	Here we relate $\Lambda^N_{s,\infty}$ to the problem of finding two 
	probability measures, $\sigma^+$ and $\sigma^-$, supported in 
	$\overline{U}$, such that the cost of transporting one into the 
	other is maximized. To 
	obtain a solution to this problem one can argue as follows: take two points $x_0$ and $y_0$ in $\overline{U}$ that realize the diameter, that 
	is, we have $d(x_0,y_0)= \diam_d (U)$. Then take 
	$\sigma^+ = \delta_{x_0}$ and $\sigma^- = \delta_{y_0}$ as a solution to 
	$\max\left\{W_s(\sigma^+,\sigma^-)\colon
 	{\sigma\in M(\overline{U}), \, \sigma^+(\overline{U})=
 	\sigma^-(\overline{U})=1} \right\}$. Note that we can recover 
 	\eqref{eq:1ercaso} from Theorem \ref{teo.Neu.Intro}.
	
	\medskip
	
	A different concept of Neumann boundary condition for fractional operators
	was recently introduced in \cite{DRV}. More precisely, for 
	$(-\Delta)_p^s$ the fractional $p-$Laplacian given by
	\[
		(-\Delta)_p^su(x)=\mbox{P.V.}\int_{\mathbb{R}^n}
		\dfrac{|u(x)-u(y)|^{p-2}(u(x)-u(y))}{d(x,y)^{n+sp}}\, dy
	\]
	(the symbol P.V. stands for the principal value of the integral), 
	we consider the following non-local non-linear fractional 
	normal derivative
	\[
		\mathcal{N}_{s,p}u(x)=\int_{U}
		\dfrac{|u(x)-u(y)|^{p-2}(u(x)-u(y))}{d(x,y)^{n+sp}}\, dy\qquad 
		x\in\mathbb{R}^n\setminus\overline{U}.
	\]
	Associated with this operator, we consider the following eigenvalue problems
	\begin{equation}\label{eq:naut.int}
		\begin{cases}
			(-\Delta)_p^su = 	\lambda |u|^{p-2}u &\text{ in } U,\\
			\mathcal{N}_{s,p} u= 0 	&\text{ in }
			\mathbb{R}^n\setminus\overline{U}.
		\end{cases}
	\end{equation}
	
	Before stating our main result concerning   these problems,  
	we need to introduce some notations.
	Let $\mathcal{W}^{s,p}(U)$ be the set of measurable functions with finite  
	$$
			\|u\|_{\mathcal{W}^{s,p}(U)}^p\coloneqq
			\|u\|_{L^p(U)}^p+ \mathcal{H}_{s,p}(u),
	$$
	where
	\[
			\mathcal{H}_{s,p}(u)\coloneqq 
			\iint_{\mathbb{R}^{2n}\setminus{(U^c)^2}}
			\dfrac{|u(x)-u(y)|^{p}}{d(x,y)^{n+ps}}
			\, dxdy,
	\]
	and $(U^c)^2=U^c\times U^c.$
	Let us also introduce
	\[
		\mathcal{H}_{s,\infty}(u)\coloneqq\sup\left\{
			\dfrac{|u(x)-u(y)|}{d(x,y)^s}\colon (x,y)
			\in\mathbb{R}^{2n}\setminus(U^c)^2\right\}.
	\]
	
	Then, for \eqref{eq:naut.int} we have the following result.
	\begin{teo}\label{teo:oautovalointro}
     	The first non-zero eigenvalue of \eqref{eq:naut.int} is given by
      		$$
			 \lambda_{s,p}=
           		 \inf\left\{
           		 \dfrac{\mathcal{H}_{s,p}(v)}{2\|v\|^p_{L^p(U)}}
           		 \colon 
           		 v\in\mathcal{W}^{s,p}(U)\setminus\{0\}, 
            		\int_U |v|^{p-2}v\, dx=0\right\}.
      		$$
		Concerning the limit as $p\to +\infty$ of these eigenvalues we have
		\[
			\lim_{p\to+ \infty}(\lambda_{s,p})^{\nicefrac1p}=
			\dfrac{2}{(\diam_d(U))^s}=\Lambda_{s,\infty}\coloneqq
			\inf\left\{\dfrac{\mathcal{H}_{s,\infty}(u)}
			{\|u\|_{L^{\infty}(U)}}\colon u\in\mathcal{A}\right\},
		\]
		where 
		\[
			\mathcal{A}\coloneqq\left\{v\in\mathcal{W}^{s,\infty}(U)
			\setminus\{0\}
			\colon \displaystyle\sup_{x\in U} u(x) +\inf_{x\in U} u(x)=0
			\right\}.
		\]
		Moreover, if $u_p$ is a  minimizer of $\lambda_{s,p}$ normalized by 
		$\|u_p\|_{L^p(U)}=1,$
		then, up to a subsequence, $u_p$ converges in $C(\overline{U})$
		to some minimizer $u_{\infty}\in W^{s,\infty}(U)$ of 
		$\Lambda_{s,\infty}^N.$
	\end{teo}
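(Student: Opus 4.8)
The plan is to split the statement into three independent tasks: (i) the variational characterization of the first non-zero eigenvalue $\lambda_{s,p}$ of \eqref{eq:naut.int}; (ii) the limit $(\lambda_{s,p})^{1/p}\to \Lambda_{s,\infty}$ together with the identification $\Lambda_{s,\infty} = 2/(\diam_d(U))^s$; and (iii) the $C(\overline U)$-convergence of normalized minimizers to a minimizer of the limit functional. For (i) I would argue as in the local Neumann case: test functions for \eqref{eq:naut.int} naturally live in $\mathcal W^{s,p}(U)$ because the energy $\mathcal H_{s,p}(u)$ only sees pairs $(x,y)\in\mathbb R^{2n}\setminus(U^c)^2$, which is exactly the domain of integration appearing in both $(-\Delta)^s_p$ on $U$ and the normal derivative $\mathcal N_{s,p}$ on $U^c$; integrating the equation against a test function $v$ and using the symmetry of the kernel, the boundary term collapses thanks to $\mathcal N_{s,p}u=0$, leaving $\mathcal H_{s,p}(u) = \lambda \int_U |u|^{p-2}u\,v$ after one uses the algebraic identity that doubles the diagonal contribution — hence the factor $\tfrac12$ in front of $\mathcal H_{s,p}$. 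The constraint $\int_U |u|^{p-2}u\,dx=0$ comes from testing against constants, which are admissible since they have zero energy. Standard direct-method arguments (weak lower semicontinuity of $\mathcal H_{s,p}$, compactness of the relevant embedding, and the fact that the Rayleigh quotient is homogeneous of degree $0$) then give existence of a minimizer and show that the infimum is the first non-zero eigenvalue.

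For (ii), the lower bound $\liminf_p (\lambda_{s,p})^{1/p}\ge 2/(\diam_d U)^s$ and the existence of the limit I would obtain by the same $\Gamma$-type asymptotics already used in \cite{DelpeSal}: one shows $(\mathcal H_{s,p}(v))^{1/p}\to \mathcal H_{s,\infty}(v)$ whenever the right side is finite, using that $\mathcal H_{s,p}(v)^{1/p}$ is (up to a normalizing measure factor that behaves subexponentially) an $L^p$-type norm of the difference quotient $|v(x)-v(y)|/d(x,y)^s$ over $\mathbb R^{2n}\setminus(U^c)^2$, so it converges to the corresponding $L^\infty$ norm. Combined with the antisymmetry constraint $\sup u + \inf u = 0$ (the $p\to\infty$ limit of the nonlinear mean-zero constraint $\int|u|^{p-2}u=0$, which forces the limiting function's range to be centered), this gives $\Lambda_{s,\infty} = \inf\{\mathcal H_{s,\infty}(u)/\|u\|_{L^\infty(U)} : u\in\mathcal A\}$. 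For the explicit value: picking $x_0,y_0$ realizing $\diam_d(U)$ and $u(x) = d(x,x_0) - \tfrac12\diam_d(U)$ (a $1$-Lipschitz-in-$d^s$ type competitor, suitably renormalized so that $\sup u + \inf u = 0$) yields the upper bound $\Lambda_{s,\infty}\le 2/(\diam_d U)^s$; conversely, any $u\in\mathcal A$ with $\mathcal H_{s,\infty}(u)\le 1$ satisfies $|u(x)-u(y)|\le d(x,y)^s$ for all admissible pairs — in particular for all $x,y\in\overline U$ — so $2\|u\|_{L^\infty(U)} = \sup u - \inf u \le \sup_{x,y} d(x,y)^s = (\diam_d U)^s$, giving the matching lower bound.

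For (iii), with $u_p$ normalized by $\|u_p\|_{L^p(U)}=1$ and minimizing, I would first upgrade the normalization to a uniform $L^\infty$ and Hölder bound: from $\mathcal H_{s,p}(u_p) = 2\lambda_{s,p}$ one extracts, via a Morrey-type fractional embedding, a uniform $C^{0,\alpha}$ bound on $u_p$ restricted to $\overline U$ for $p$ large, so Arzelà–Ascoli gives a subsequence converging in $C(\overline U)$ to some $u_\infty$. Passing the constraint to the limit gives $\sup u_\infty + \inf u_\infty = 0$ and $\|u_\infty\|_{L^\infty(U)} = 1$, so $u_\infty\in\mathcal A$ is nontrivial; lower semicontinuity of $\mathcal H_{s,\infty}$ under uniform convergence together with the convergence $(\mathcal H_{s,p}(u_p))^{1/p}\to\mathcal H_{s,\infty}(u_\infty)$ forces $u_\infty$ to be a minimizer of $\Lambda_{s,\infty}$. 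I expect the main obstacle to be the two "constraint passage" points: justifying that the nonlinear mean-zero condition $\int_U |u_p|^{p-2}u_p\,dx=0$ really does pass to $\sup u_\infty+\inf u_\infty=0$ in the limit (this requires a careful argument showing the $L^{p-1}$-normalized "signed mass" concentrates near the extrema of $u_p$), and pinning down the uniform Hölder estimate with the nonstandard integration domain $\mathbb R^{2n}\setminus(U^c)^2$ rather than all of $\mathbb R^{2n}$ — one must check the embedding constants do not blow up and that tail contributions from $x\in U^c$ are controlled using $u_p\in\mathcal W^{s,p}(U)$.
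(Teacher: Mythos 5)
Your three-part decomposition agrees with the paper's (Lemmas~\ref{lema:propaut}, \ref{lema:anv}, \ref{lema:conver}), and Parts~(i) and~(iii) follow the paper's route closely: (i) is standard compactness together with $\mathcal{W}^{s,p}(U)\subset W^{s,p}(U)$; (iii) is your Morrey plus Arzel\`a--Ascoli sketch made quantitative (fix $q$ with $sq>2n$, H\"older gives a uniform $W^{r,q}(U)$-bound with $r=s-n/q$, then pass to the limit), and the constraint passage $\int_U|u_p|^{p-2}u_p\,dx=0\Rightarrow\sup u_\infty+\inf u_\infty=0$ is delegated to \cite{EKNT,RoSaint}, exactly as you flag.

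Part~(ii), however, has two genuine gaps. First, your competitor $u(x)=d(x,x_0)-\tfrac12\diam_d(U)$ has $\mathcal{H}_{s,\infty}(u)=+\infty$: since $|u(x)-u(y)|$ can be comparable to $d(x,y)$, the quotient $|u(x)-u(y)|/d(x,y)^s$ is of order $d(x,y)^{1-s}$, which is unbounded over $\mathbb{R}^{2n}\setminus(U^c)^2$ because one of the two points is free to escape to infinity. You need the snowflaked profile $d(\cdot,y_0)^s$ rather than $d(\cdot,y_0)$: the paper's Step~1 competitor is $u(x)=-1+\tfrac{2}{(\diam_d U)^s}\,d(x,y_0)^s$, for which $|u(x)-u(y)|\le\tfrac{2}{(\diam_d U)^s}\,d(x,y)^s$ by the elementary inequality $|a^s-b^s|\le|a-b|^s$. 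Second, the asserted convergence $(\mathcal{H}_{s,p}(v))^{1/p}\to\mathcal{H}_{s,\infty}(v)$ ``whenever the right side is finite'' is false: $\mathcal{H}_{s,p}(v)^{1/p}$ is an $L^p$-norm of $|v(x)-v(y)|/d(x,y)^s$ against the measure $dx\,dy/d(x,y)^n$, which has infinite mass both at the diagonal and at infinity, so a generic $v$ with $\mathcal{H}_{s,\infty}(v)<\infty$ has $\mathcal{H}_{s,p}(v)=+\infty$ for every finite $p$ (indeed the paper's own Step~1 competitor lies in no $\mathcal{W}^{s,p}(U)$). The paper circumvents this heuristic entirely: for $\liminf_p(\lambda_{s,p})^{1/p}\ge 2/(\diam_d U)^s$ it uses the pointwise inequality $\llbracket u\rrbracket_{s,p}^p\le\mathcal{H}_{s,p}(u)$, hence $\lambda_{s,p}^N\le 2\lambda_{s,p}$ (Remark~\ref{remark:estima}), and imports the known limit \eqref{eq:1ercaso}; for $\limsup_p(\lambda_{s,p})^{1/p}\le 2/(\diam_d U)^s$ it plugs in the \emph{Lipschitz} profile equal to $d(x,y_0)$ on the $\delta$-neighbourhood $U_\delta$ and $0$ outside (compactly supported, so $\mathcal{H}_{s,p}$ is finite for $p$ large), estimates $\mathcal{H}_{s,p}$ directly by H\"older, shifts by the constant $c_p$ enforcing the nonlinear mean-zero constraint, and invokes \cite{EKNT} for $\liminf_p\|u-c_p\|_{L^p(U)}$. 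You would need to replace the $L^p\to L^\infty$ shortcut with one of these more careful devices.
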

	
	Note that, since the limit of $(\lambda_{s,p})^{\nicefrac1p}$, 
	$\Lambda_{s,\infty}$, coincides with
	$\Lambda_{s,\infty}^N$ (given in \eqref{eq:1ercaso}), we get the same 
	interpretation in terms of optimal mass transportation given in 
	Theorem \ref{teo.Neu.Intro}.
	
	\medskip
	
	To end this introduction, let us briefly comment on previous results.
	The limit as $p\to +\infty$ of the first eigenvalue 
	$\lambda_{p}^D$ of the usual local
	$p$-Laplacian with Dirichlet boundary condition was studied in 
	\cite{JLM,JL}, (see also \cite{BK} for an anisotropic version). 
	In those papers 
	the authors prove that
	$$
		\lambda_{\infty}^D\coloneqq
		\lim_{p\to +\infty}\left(\lambda_{p}^D\right)^{1/p}=
 		\inf \left\{
 		\displaystyle \frac{ \displaystyle \|\nabla v\|_{L^\infty (U)}}
	      {\displaystyle \|v\|_{L^\infty (U)} }\colon
	      v\in W^{1,\infty}_0 (\Omega)\right\}
		= \frac{1}{R},
	$$
	where $R$ is, as before, the largest possible radius of a ball contained 
	in $U$.
	In addition, the authors show the existence of extremals, i.e. 
	functions where the above infimum is attained.
	These extremals can be constructed taking the limit as 
	$p\to +\infty$ in the eigenfunctions of the $p-$Laplacian 
	eigenvalue problems (see \cite{JLM}) and are viscosity solutions of 
	the following eigenvalue problem 
	(called the infinity eigenvalue problem in the literature):
	\begin{equation*}
		\begin{cases}
			\min \left\{|D u|-\lambda_{\infty}^D u,\, 
			\Delta_{\infty} u \right\}=0 &\text{in }U,\\
			u=0& \mbox{on } \p U.
		\end{cases}
	\end{equation*}
	The limit operator
	$\Delta_{\infty}$ that appears here
	is the $\infty$-Laplacian given by
	$\Delta_\infty u = -\langle D^2u Du, Du \rangle.$
	Remark that solutions to 
	$\Delta_p v_p =0$ 
	with a Dirichlet data $v_p=f$ on $\partial U$ converge as 
	$p\to +\infty$ to the viscosity solution to $\Delta_\infty v =0$ 
	with $v=f$ on $\partial U$, 
	see \cite{ACJ,BBM,CIL}. 
	This operator appears naturally when one considers absolutely minimizing 
	Lipschitz extensions in $U$ of a boundary data $f,$ 
	see \cite{A,ACJ,Jensen}.

	Recently in \cite{CDJ}, the authors relate 
	$\lambda_{\infty}^{D}$ with the 
	Monge-Kantorovich distance $W_1$. Recall that the Monge-Kantorovich distance 
	$W_1(\mu,\nu)$ between two probability measures $\mu$ and $\nu$ over 
	$\overline{U}$ is defined by
	\begin{equation} \label{DefW1}
 		W_1(\mu,\nu)\! \coloneqq 
 		\max\left\{
 		\int_U v\,(d\mu-d\nu)
 		\colon v\in W^{1,\infty}(U),\,\|\nabla v\|_{L^\infty(U)}\le 1 
 		\right\}.
	\end{equation}
	It was proved in \cite{CDJ} that
	\begin{equation*} 
 		\frac1{\lambda_{\infty}^D} = \sup_{\mu\in P(U)} W_1(\mu,P(\p U)).
	\end{equation*}
	Notice that this result is the analogous to Theorem \ref{teo.Dir.Intro} in 
	the local case.
	
	For the Neumann problem for the local $p-$Laplacian we refer to 
	\cite{EKNT, RoSaint} where the authors prove the local analogous to Theorem 
	\ref{teo.Neu.Intro}. In this local case the distance that appears  in the 
	limit is the geodesic distance inside $U$. This is, in contrast with the 
	non-local case studied here, where we can consider any distance $d$ 
	equivalent to the Euclidean one, see \eqref{DefLambdapsNeu}.
	
	For limits as $p\to +\infty$ in non-local $p-$Laplacian problems and its 
	relation with optimal mass transport we refer to \cite{Jilka}. Eigenvalue 
	problems were not considered there.

	The case of a Steklov boundary condition has also been investigated 
	recently. 
	Indeed, the authors in \cite{GMPR} (see also \cite{Le} for a slightly 
	different problem) studied the behavior as $p\to+ \infty$ of the so-called 
	variational eigenvalues $\lambda_{k,p}^S$, $k\ge 1$, of the $p$-Laplacian 
	with a Steklov boundary condition. In particular they proved that
	$$ 
		\lim_{p\to+ \infty}\left(\lambda_{1,p}^S\right)^{1/p} = 1 
		\quad \text
		{ and }		\quad
		\lambda_{2,\infty}^{S}\coloneqq\lim_{p\to +\infty}
		\left(\lambda_{2,p}^S\right)^{1/p} 
		= \frac{2}{\diam(U)}, 
	$$
	and also identify the limit variational problem defining 
	$\lambda_{2,\infty}^S$.
	
	\medskip

	The paper is organized as follows: in Section \ref{sect-MK} we collect some 
	preliminary results concerning optimal mass transport with cost 
	$d(x,y)^s$, in particular, we provide a statement of the 
	Kantorovich duality result that will be used in the proofs of our 
	results; in Section \ref{sect-Dir} we deal with the 
	Dirichlet problem and prove Theorem \ref{teo.Dir.Intro}; 
	in Section \ref{sect-New} we study the Neumann case 
	(Theorem~\ref{teo.Neu.Intro}). Finally, in Section \ref{elOtro} we deal
	with problem \eqref{eq:naut.int} 
	and we prove Theorem \ref{teo:oautovalointro}.

\section{Kantorovich duality for the cost $c(x,y)=d(x,y)^s$ }\label{sect-MK}

	In this section we follow \cite{Villani}. We first recall the definition of $c$-concavity and $c$-transform.

	\begin{defi}[{\cite[Definitions 5.2 and 5.7]{Villani}}]
		Let $X,Y$ be two sets and $c:X\times Y\to \R\cup\{+\infty\}.$
		A function $\psi:X\to \R\cup\{+\infty\}$ is said to be 
		$c$-convex if $\psi\not\equiv +\infty$ and there exists 
		$\phi:Y\to \R\cup\{\pm\infty\}$ such that
		\begin{equation}\label{Defcconvex} 
			\psi(x)=\sup_{y\in Y} \phi(y)-c(x,y) \qquad \text{ for all } 
			x\in X.
		\end{equation} 
		Its $c$-transform is the function $\psi^c$ defined 
		by
		$$ 
			\psi^c(y)=\inf_{x\in X} \psi(x)+c(x,y)\qquad 
			\text{ for all } y\in Y. 
		$$
		A function $\phi:Y\to \R\cup\{-\infty\}$ is $c$-concave 
		if $\phi\not\equiv -\infty$ and $\phi=\psi^c$ for some function 
		$\psi:X\to \R\cup\{\pm\infty\}$. Then its c-transform $\phi^c$ is
 		$$ 
 			\phi^c(x)=\sup_{y\in Y} \phi(y)-c(x,y)\qquad \text{ for all } 
 			x\in X. 
 		$$
	\end{defi}

	There holds: 

	\begin{prop}[{\cite[Proposition 5.8]{Villani}}] 
		For any $\psi:X\to \R\cup\{+\infty\}$, $\psi^c=\psi^{ccc}$ and $\psi$ is $c$-convex 
		iff $\psi=\psi^{cc}$.
	\end{prop}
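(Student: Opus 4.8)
The plan is to follow the classical Fenchel--Moreau type argument, exactly as in \cite[Chapter 5]{Villani}: the whole statement reduces to two elementary one-line inequalities together with the monotonicity of the two $c$-transform operations. By monotonicity I mean that the map $\psi\mapsto\psi^c$ sending functions on $X$ to functions on $Y$, as well as the map $\phi\mapsto\phi^c$ sending functions on $Y$ to functions on $X$, satisfy $f\le g\Rightarrow f^c\le g^c$; this is immediate from the monotonicity of $\inf$ and $\sup$ in the defining formulas. I will use the same letter $c$ for both transforms, the domain making clear which one is meant.

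The first basic inequality is that $\psi^{cc}\le\psi$ for every $\psi\colon X\to\R\cup\{+\infty\}$. To see it, fix $x\in X$; for every $y\in Y$ we have $\psi^c(y)=\inf_{x'\in X}\bigl(\psi(x')+c(x',y)\bigr)\le\psi(x)+c(x,y)$, hence $\psi^c(y)-c(x,y)\le\psi(x)$ (with the usual conventions on $\pm\infty$), and taking the supremum over $y$ gives $\psi^{cc}(x)\le\psi(x)$. The dual inequality, proved symmetrically, is that $\phi\le\phi^{cc}$ for every $\phi\colon Y\to\R\cup\{-\infty\}$: fixing $y$, for every $x$ one has $\phi^c(x)\ge\phi(y)-c(x,y)$, so $\phi^c(x)+c(x,y)\ge\phi(y)$, and taking the infimum over $x$ gives $\phi^{cc}(y)\ge\phi(y)$. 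From these two facts the identity $\psi^c=\psi^{ccc}$ drops out immediately: applying the monotone transform $(\,\cdot\,)^c$ to $\psi^{cc}\le\psi$ yields $\psi^{ccc}\le\psi^c$, while applying the dual inequality to the function $\phi:=\psi^c$ (which lives on $Y$) yields $\psi^c\le(\psi^c)^{cc}=\psi^{ccc}$; hence equality.

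For the equivalence, if $\psi=\psi^{cc}$ then $\psi(x)=\psi^{cc}(x)=\sup_{y\in Y}\bigl(\psi^c(y)-c(x,y)\bigr)$, which is precisely the statement that $\psi$ is the $c$-transform of $\phi:=\psi^c$, so $\psi$ is $c$-convex (the requirement $\psi\not\equiv+\infty$ being part of the standing hypothesis). Conversely, if $\psi$ is $c$-convex, write $\psi=\phi^c$ with $\phi\colon Y\to\R\cup\{\pm\infty\}$; then $\psi^{cc}=(\phi^c)^{cc}=\phi^{ccc}$, and the argument of the previous paragraph applies verbatim with the roles of $X$ and $Y$ exchanged (the two basic inequalities being symmetric), giving $\phi^{ccc}=\phi^c$, whence $\psi^{cc}=\phi^c=\psi$. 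I do not expect any real obstacle here; the only thing to watch is the bookkeeping of the values $\pm\infty$ and the degenerate possibility that some transform becomes identically $\pm\infty$ --- a possibility that does not occur for the cost $c(x,y)=d(x,y)^s$ of interest in this paper, which is finite and continuous on all of $\R^n\times\R^n$.
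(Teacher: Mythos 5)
The paper cites this proposition from Villani's book without reproducing a proof, so there is no in-paper argument to compare against; your proof is a correct, self-contained rendering of the standard Fenchel--Moreau-type argument that Villani uses. The two elementary inequalities $\psi^{cc}\le\psi$ and $\phi\le\phi^{cc}$, together with monotonicity of the two $c$-transforms, give the whole statement, and your caveat about extended-value arithmetic and the degenerate case $\psi\equiv+\infty$ is appropriate (and, as you note, moot for the finite cost $c(x,y)=d(x,y)^s$ used in this paper).
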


	In the case where the cost function is $c(x,y)=d(x,y)^s$, we have the following characterization of $c$-convex function.

	\begin{lema}\label{concave} 
		Let $c(x,y)=d(x,y)^s$  and $X=Y=\overline{U}.$ 
		Any c-convex function $\psi$ satisfies $\psi^c=\psi$ and 
		\begin{equation}\label{sHolder}
			|\psi(x)-\psi(\tilde x)|\le d(x,\tilde x)^s \qquad 
			\text{ for all } x,\tilde x\in\overline{U}.
		\end{equation}
	\end{lema}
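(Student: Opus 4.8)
The plan is to use the fact that, because $0<s<1$, the cost function $\rho(x,y):=d(x,y)^s$ is itself a distance on $\R^n$: the elementary inequality $(a+b)^s\le a^s+b^s$ for $a,b\ge 0$ and $s\in(0,1]$ turns the triangle inequality for $d$ into one for $\rho$, while symmetry and positivity are immediate. Hence $\rho$ obeys the reverse triangle estimate $|\rho(x,y)-\rho(\tilde x,y)|\le\rho(x,\tilde x)$ for all $x,\tilde x,y$, and it is bounded on the compact set $\overline U\times\overline U$. Writing a $c$-convex $\psi$ as $\psi(x)=\sup_{y\in\overline U}\big(\phi(y)-\rho(x,y)\big)$, I would first note that $\phi$ is nowhere $+\infty$ (otherwise $\psi\equiv+\infty$, against the definition of $c$-convexity) and that $\psi$ never takes the value $-\infty$ (again by that definition), so that, using the boundedness of $\rho$ and any point where $\psi$ is finite, $\psi$ is real-valued on all of $\overline U$.

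Next I would establish the Hölder bound \eqref{sHolder}. Fixing $x,\tilde x\in\overline U$, for every $y$ the reverse triangle estimate for $\rho$ gives
$$
\phi(y)-\rho(x,y)\le\big(\phi(y)-\rho(\tilde x,y)\big)+\rho(x,\tilde x)\le\psi(\tilde x)+\rho(x,\tilde x),
$$
and taking the supremum over $y\in\overline U$ yields $\psi(x)\le\psi(\tilde x)+d(x,\tilde x)^s$; swapping $x$ and $\tilde x$ gives \eqref{sHolder}.

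For the identity $\psi^c=\psi$ I would argue by two inequalities. Since $\psi^c(y)=\inf_{x\in\overline U}\big(\psi(x)+\rho(x,y)\big)$, choosing $x=y$ gives $\psi^c(y)\le\psi(y)$. Conversely, \eqref{sHolder} in the form $\psi(x)\ge\psi(y)-\rho(x,y)$ shows $\psi(x)+\rho(x,y)\ge\psi(y)$ for every $x$, so taking the infimum gives $\psi^c(y)\ge\psi(y)$; hence $\psi^c=\psi$.

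The argument is essentially the classical remark that for a metric cost, $c$-convexity coincides with being $1$-Lipschitz for that metric and the $c$-transform acts as the identity, so I do not anticipate real difficulty. The only point needing a little care is the a priori finiteness of $\psi$, where the boundedness of $\overline U$ enters, together with the bookkeeping needed to make sure the value $-\infty$ is genuinely excluded before writing the chains of inequalities above.
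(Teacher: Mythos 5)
Your proposal is correct and follows essentially the same route as the paper: both use the fact that $d(\cdot,\cdot)^s$ satisfies the triangle inequality for $0<s<1$ to derive the Hölder bound from the representation $\psi=\phi^c$, and then combine the trivial inequality $\psi^c\le\psi$ (take $x=y$) with the Hölder bound to get $\psi^c\ge\psi$. Your version is slightly more careful about the a priori finiteness of $\psi$, a point the paper glosses over, but this is a refinement of the same argument rather than a different one.
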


	\begin{proof}
	Notice that 
		\[
			\psi^c(y)=\inf_{x\in\overline{U}} \psi(x)+d(x,y)^s
			\le \psi(y)
		\] 
		and that the opposite inequality holds if \eqref{sHolder} holds.
    We now verify \eqref{sHolder}. Let $\phi:\overline{U}\to \R\cup\{\pm\infty\}$ such that $\psi=\phi^c$ as in \eqref{Defcconvex}. 
    Since $s\in (0,1)$, we have $d(x,y)^s\le d(x,\tilde x)^s+d(y,\tilde x)^s $ for any 
		$x,\tilde x,y\in\overline{U}$.
		It follows that
		\begin{eqnarray*}
			\psi(x) &=& \phi^c(x) = \sup_{y\in\overline{U}} \phi(y)-d(x,y)^s  \\
				&\ge& \sup_{y\in \overline{U}} 	\phi(y)-d(y,\tilde x)^s - d(x,\tilde x)^s
				= \phi(\tilde x)- d(x,\tilde x)^s,
		\end{eqnarray*}
		i.e. $\phi(\tilde x)-\phi(x)\le d(x,\tilde x)^s$. The opposite 
		inequality holds as well by switching $x$ and $\tilde x$.
		Thus \eqref{sHolder} holds.
	\end{proof}

	We recall the following result, see \cite[Theorem 5.9]{Villani}.

	\begin{teo}
		Let $(X,\mu)$ and $(Y,\nu)$ be two Polish probability spaces 
		(i.e. metric complete separable) and let 
		$c:X\times Y\to \R\cup\{+\infty\}$ be a lower 
		semicontinuous function such that
		$$ 
			c(x,y)\ge a(x)+b(y) \qquad \text{ for all } 
			(x,y)\in X\times Y 
		$$
		for some real-valued upper semicontinuous 
		functions $a\in L^1(\mu)$ and $b\in L^1(\nu)$. 
		Then, letting $J(\phi,\psi):=\int_Y\phi \,d\nu - \int_X \psi\,d\mu$, 
		\begin{eqnarray*}  
			W_c(\mu,\nu):=\min_{\pi \in\Pi(\mu,\nu)} \int_{X\times Y} c(x,y)\,d\pi(x,y)
			&=& \sup_{(\psi,\phi)\in L^1(\mu)\times L^1(\nu),\,\phi-\psi\le c} J(\phi,\psi) \\			  
			&=& \sup_{\psi\in L^1(\mu)} J(\psi^c,\psi)
		\end{eqnarray*} 
		and in the above $\sup$, one might as well impose $\psi$ to be $c$-convex.
		Moreover if $c$ is real-valued, $W_c(\mu,\nu)<\infty$ and
		$$ 
			c(x,y)\le c_X(x)+c_Y(y) \qquad \text{ for all } 
			(x,y)\in X\times Y  	
		$$
		for some $c_X\in L^1(\nu)$ and $c_Y\in L^1(\mu)$, then the above 
		$\sup$ is a $\max$ and one might as well impose $\psi$ to be $c$-convex.
	\end{teo}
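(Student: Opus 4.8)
The plan is to establish, in order, the four assertions packaged in the statement: attainment of the infimum defining $W_c$ (the ``$\min$''), the duality identity $W_c=\sup J$, the reduction of that supremum to pairs $(\psi^c,\psi)$ with $\psi$ $c$-convex, and attainment of the supremum under the two-sided bound; throughout I would reduce at the outset to the case $X=Y$ a compact metric space with $c$ continuous and bounded (which is exactly the situation $X=Y=\overline U$, $c=d^s$ relevant to this paper), postponing to a final remark the approximation needed for general Polish spaces. The two easy ingredients come first. The set $\Pi(\mu,\nu)$ is nonempty (it contains $\mu\otimes\nu$), convex, and weakly-$*$ compact (closedness is clear; tightness from Prokhorov applied to the tight marginals), and $\pi\mapsto\int c\,d\pi=\int(c-a\oplus b)\,d\pi+\int a\,d\mu+\int b\,d\nu$ is weakly-$*$ lower semicontinuous because $c-a\oplus b\ge 0$ is l.s.c., hence an increasing limit of bounded continuous functions; so the minimum is attained. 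For the trivial half of duality, integrating the constraint $\phi(y)-\psi(x)\le c(x,y)$ against any $\pi\in\Pi(\mu,\nu)$ gives $J(\phi,\psi)\le\int c\,d\pi$, and since $\psi^c(y)=\inf_x\psi(x)+c(x,y)$ satisfies $\psi^c-\psi\le c$, the pair $(\psi,\psi^c)$ is admissible, so $\sup_\psi J(\psi^c,\psi)\le\sup\{J(\phi,\psi):\phi-\psi\le c\}\le W_c$.

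For the reverse inequality $W_c\le\sup J$ I would invoke the Fenchel--Rockafellar duality theorem on $E=C(X\times Y)$, with dual $M(X\times Y)$. Set $\Theta(u)=0$ if $u\ge -c$ pointwise and $+\infty$ otherwise, and $\Xi(u)=\int_X\psi\,d\mu-\int_Y\phi\,d\nu$ if $u(x,y)=\psi(x)-\phi(y)$ for some $\psi\in C(X),\phi\in C(Y)$ and $+\infty$ otherwise; both are convex, and on their common domain $\Theta+\Xi$ equals $-J(\phi,\psi)$ ranging over admissible pairs, so $\inf_E(\Theta+\Xi)=-\sup\{J(\phi,\psi):\phi-\psi\le c\}$. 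A large constant $u_0\equiv M>\|c\|_\infty$ lies in the domain of $\Xi$ and in the interior of $\{\Theta<\infty\}$, with $\Theta$ vanishing near it, so the constraint qualification holds and $\inf_E(\Theta+\Xi)=\max_\pi\big(-\Theta^*(\pi)-\Xi^*(-\pi)\big)$. A short Legendre-transform computation then shows the right-hand side equals $-\min_{\pi\in\Pi(\mu,\nu)}\int c\,d\pi=-W_c$: $\Xi^*(-\pi)$ vanishes exactly when $\pi$ (after a sign change) has marginals $\mu,\nu$ and is $+\infty$ otherwise, while $\Theta^*(\pi)$ forces $\pi$ to have a definite sign and returns $\int c\,d\pi$. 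Hence $W_c=\sup\{J(\phi,\psi):\phi-\psi\le c\}$.

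To restrict to $c$-convex potentials I would run the $c$-transform reduction: given an admissible $(\psi,\phi)$, pass first to $(\psi,\psi^c)$, noting $\psi^c\ge\phi$ so it is still admissible and $J$ does not decrease, and then to $(\psi^{cc},\psi^c)$, noting $\psi^{cc}\le\psi$ so it is still admissible, $J$ does not decrease, $\psi^{cc}$ is $c$-convex, and $(\psi^{cc})^c=\psi^{ccc}=\psi^c$ by the proposition recalled above; this yields $W_c=\sup_{\psi\ c\text{-convex}}J(\psi^c,\psi)$. For attainment under $c\le c_X\oplus c_Y$, I would take a maximizing sequence of $c$-convex $\psi_k$, normalize them by subtracting constants so that $\psi_k(x_0)=0$, observe that they are then uniformly bounded and equicontinuous — for $c=d^s$ this equicontinuity is exactly the $s$-Hölder estimate of Lemma~\ref{concave} — apply Arzel\`a--Ascoli to extract $\psi_k\to\psi_\infty$ uniformly with $\psi_\infty$ still $c$-convex, and pass to the limit in $J$ using the integrable envelopes $c_X\in L^1(\nu),c_Y\in L^1(\mu)$.

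The real obstacle, I expect, is none of the above in the compact continuous case — there Fenchel--Rockafellar settles the duality painlessly — but the reduction of the general Polish, merely lower semicontinuous situation to that case: approximating $\mu,\nu$ by compactly supported measures and $c$ from below by bounded continuous costs $c_k$ while verifying that $W_{c_k}\uparrow W_c$, which is precisely where the hypothesis $c\ge a\oplus b$ with $a\in L^1(\mu),b\in L^1(\nu)$ cannot be dropped. This passage is the technical core of the result, and it is what occupies most of the proof in \cite{Villani}.
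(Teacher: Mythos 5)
The paper does not prove this theorem; it is quoted directly from Villani's book, with the citation \cite[Theorem 5.9]{Villani} standing in for the proof, so there is no in-paper argument to compare yours against. On its own terms, your sketch follows the standard route and is correct in outline: weak-$*$ compactness of $\Pi(\mu,\nu)$ together with lower semicontinuity of $\pi\mapsto\int c\,d\pi$ (using $c-a\oplus b\ge 0$ l.s.c.) for attainment of the primal minimum; the trivial inequality $\sup J\le W_c$ by integrating the constraint against any transport plan; Fenchel--Rockafellar duality on $C(X\times Y)$ for the reverse inequality in the compact, bounded-continuous case, with the constraint qualification satisfied at a large constant function and the Legendre transforms of $\Theta$ and $\Xi$ recovering $\Pi(\mu,\nu)$ and $\int c\,d\pi$; the double $c$-transform $(\psi,\phi)\mapsto(\psi,\psi^c)\mapsto(\psi^{cc},\psi^c)$ to pass to $c$-convex potentials without decreasing $J$; and a normalization-plus-Arzel\`a--Ascoli argument for attainment of the dual supremum under the two-sided bound $a\oplus b\le c\le c_X\oplus c_Y$. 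You are also right that the genuine technical burden in the theorem as stated lies in passing from the compact bounded-continuous case to general Polish spaces and lower semicontinuous, possibly $+\infty$-valued costs, and that the integrable lower envelope $a\oplus b$ is precisely what stabilizes that approximation; this is where the cited source spends most of its effort. For the application in this paper, $X=Y=\overline U$ is compact and $c=d(\cdot,\cdot)^s$ is continuous and bounded, so none of that final step is needed, and the equicontinuity you invoke in the Arzel\`a--Ascoli step is exactly the $s$-H\"older bound established in Lemma~\ref{concave}.
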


	In the particular case $c(x,y)=d(x,y)^s$, $X=Y=\overline{U}$ 
	with $U$  bounded, we obtain in view of 
	Lemma \ref{concave} the following result.

	\begin{teo}\label{Duality} 
		For any $\mu,\nu\in P(\overline{U})$,
		\begin{eqnarray*}
 			\min_{\pi \in\Pi(\mu,\nu)} 
 			\int_{\overline{U}\times\overline{U}}  d(x,y)^s\,d\pi(x,y)
				&=& \max_{|\psi (x) - \psi (y)|\le d(x,y)^s } 
				\int_{\overline{U}}\psi\,d\nu - \int_ {\overline{U}} \psi\,d\mu .
		\end{eqnarray*}
	\end{teo}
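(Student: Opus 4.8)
The plan is to deduce the statement directly from the general Kantorovich duality theorem recalled above (\cite[Theorem 5.9]{Villani}), specialized to $X=Y=\overline{U}$ and $c(x,y)=d(x,y)^s$, once we identify the class of $c$-convex functions with the class of functions satisfying \eqref{sHolder}.

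First I would check that the hypotheses of \cite[Theorem 5.9]{Villani} are met. Since $U$ is bounded and $d$ is equivalent to the Euclidean distance, $\overline{U}$ is a compact metric space, hence Polish, and equipped with any $\mu\in P(\overline{U})$ it becomes a Polish probability space. The cost $c(x,y)=d(x,y)^s$ is continuous, in particular lower semicontinuous, and bounded on $\overline{U}\times\overline{U}$, say $0\le c\le M$. Taking $a\equiv b\equiv 0$ gives $c(x,y)\ge a(x)+b(y)$ with $a\in L^1(\mu)$, $b\in L^1(\nu)$, and taking $c_X\equiv c_Y\equiv M$ gives $c(x,y)\le c_X(x)+c_Y(y)$ with $c_X\in L^1(\nu)$, $c_Y\in L^1(\mu)$. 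Hence the cited theorem applies: the minimum on the left is attained, and
\[
\min_{\pi\in\Pi(\mu,\nu)}\int_{\overline{U}\times\overline{U}}d(x,y)^s\,d\pi(x,y)
=\max_{\psi}\left(\int_{\overline{U}}\psi^c\,d\nu-\int_{\overline{U}}\psi\,d\mu\right),
\]
where the maximum ranges over $c$-convex functions $\psi$.

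The key step is to show that $\psi$ is $c$-convex if and only if $\psi$ is real-valued and satisfies $|\psi(x)-\psi(\tilde x)|\le d(x,\tilde x)^s$ for all $x,\tilde x\in\overline{U}$, and that in that case $\psi^c=\psi$. One implication is exactly Lemma \ref{concave}. For the converse, given $\psi$ with $|\psi(x)-\psi(\tilde x)|\le d(x,\tilde x)^s$, I would note that
\[
\psi(x)=\sup_{y\in\overline{U}}\big(\psi(y)-d(x,y)^s\big),
\]
since the choice $y=x$ attains the value $\psi(x)$ while for every other $y$ the $s$-Hölder bound gives $\psi(y)-d(x,y)^s\le\psi(x)$. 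Thus $\psi=\phi^c$ with $\phi=\psi$, so $\psi$ is $c$-convex, and then $\psi^c=\psi$ by Lemma \ref{concave}. Moreover any such $\psi$, being continuous on the compact set $\overline{U}$, is bounded and hence lies in $L^1(\mu)\cap L^1(\nu)$, so it is an admissible competitor.

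Combining the two steps, in the maximization on the right we may replace the constraint ``$\psi$ is $c$-convex'' by ``$|\psi(x)-\psi(y)|\le d(x,y)^s$'' and simultaneously replace $\psi^c$ by $\psi$, so that
\[
\int_{\overline{U}}\psi^c\,d\nu-\int_{\overline{U}}\psi\,d\mu
=\int_{\overline{U}}\psi\,d\nu-\int_{\overline{U}}\psi\,d\mu,
\]
which gives the asserted identity. I do not expect a real obstacle: the only points requiring a little care are the (essentially trivial, thanks to boundedness and continuity on the compact set $\overline{U}$) verification of the integrability and semicontinuity assumptions in \cite[Theorem 5.9]{Villani}, and the elementary supremum representation of an $s$-Hölder function that supplies the converse in the equivalence above.
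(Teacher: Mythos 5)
Your proof is correct and follows essentially the same route as the paper: invoke Villani's Kantorovich duality with $c(x,y)=d(x,y)^s$ and then use Lemma~\ref{concave} to replace $c$-convex functions with $s$-H\"older functions and $\psi^c$ with $\psi$. The one small organizational difference is that you prove a genuine equivalence (that the class of $c$-convex functions coincides with the class of real-valued $s$-H\"older functions, supplying the easy converse via the identity $\psi(x)=\sup_y(\psi(y)-d(x,y)^s)$), whereas the paper avoids proving that converse by a sandwich chain: it observes that $\max_{\psi\ c\text{-convex}} J(\psi^c,\psi)\le\max_{|\psi(x)-\psi(y)|\le d(x,y)^s} J(\psi,\psi)\le\sup_{\phi-\psi\le c} J(\phi,\psi)$, with both extremes equal to $W_c(\mu,\nu)$ by duality, so all terms agree. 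Both arguments are valid; yours is marginally more explicit at the cost of one extra (elementary) verification, while the paper's is more economical.
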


	\begin{proof} 
	In view of Lemma \ref{concave}  and the previous theorem, we can write that 
	\begin{eqnarray*} 
	W_c(\mu,\nu) 
	&=& \max_{\psi\in L^1(\mu)\,\mbox{\tiny c-convex}} J(\psi^c,\psi) \\
	&\le& \max_{|\psi(x)-\psi(y)|\le d(x,y)^s} J(\psi,\psi) \\
	&\le& \sup_{(\psi,\phi)\in L^1(\mu)\times L^1(\nu),\,\phi-\psi\le d(x,y)^s} J(\phi,\psi) \\
	&	=& W_c(\mu,\nu) 	  
	\end{eqnarray*}
	form which we deduce the result.
	\end{proof}

\section{The Dirichlet case} \label{sect-Dir}
	In this section, we borrow ideas from \cite{CDJ}.
	Let us consider
	\[
		G_p, G_\infty\colon C(\overline{U})\times 
		M(\overline{U})\to \R\cup\{+\infty\} 
	\] 
	the functionals given by
	\begin{equation*}
 		G_p(v,\sigma) =
 				\begin{cases}
  					-\displaystyle\int_U v \sigma \, dx,
  					&\text{ if } \sigma\in L^{p'}(U),\, 
  					\|\sigma\|_{L^{p'}(U)}\le 1, \\
   					& \text{ and } v\in 
   				\widetilde{W}^{s,p}(U),\, [v]_{s,p}\le 
   				(\lambda_{s,p}^D)^{1/p}, \\
  				+ \infty & \text{ otherwise, }
				\end{cases}
	\end{equation*}
	and
	\begin{equation*}
 		G_\infty(v,\sigma) =
 			\begin{cases}
  				-\displaystyle\int_U v\,d\sigma, &\text{ if } 
  					\sigma\in M(\overline{U}),\, 
  					|\sigma|(U)\le 1, \\
  								&\text{ and } v\in 
  								\widetilde{W}^{s,\infty}(U),
  								\, | v(x) - v(y) 
  								|\le \Lambda^D_{s,\infty} |x-y|^s, \\
  					+ \infty & \text{ otherwise. }
			\end{cases}
	\end{equation*}
	
	In the space $M(\overline{U})$, we consider the weak convergence of 
	measures, and in the space $C(\overline{U})$ the uniform convergence. 

	First, we have that $G_\infty$ is the limit as $p\to+ \infty$ of 
	$G_p$ in the $\Gamma-$limit sense 
	(we refer to \cite{dalmaso} for the definition of 
	$\Gamma-$convergence).

	\begin{lema} \label{lema.gamma.conver} 
		The functionals $G_p$ $\Gamma-$converge as 
		$p\to +\infty$ to $G_\infty$.
	\end{lema}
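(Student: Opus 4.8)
The plan is to verify the two defining conditions of $\Gamma$-convergence directly, working in the product space $C(\overline{U})\times M(\overline{U})$ equipped with uniform convergence in the first factor and weak-$*$ convergence of measures in the second. Throughout we will use that $(\lambda_{s,p}^D)^{1/p}\to \Lambda_{s,\infty}^D$ (the Lindgren--Lindqvist result quoted in the introduction) and the elementary fact that on a bounded set, for $p'<q'$ one has continuous embeddings $L^{q'}(U)\hookrightarrow L^{p'}(U)$ with constants controlled by $|U|$, while a uniform bound $[v]_{s,p}\le C$ together with $v\equiv 0$ outside $U$ gives, via a standard Morrey-type estimate in fractional Sobolev spaces, an $s'$-Hölder bound on $v$ for any $s'<s$ and eventually the Lipschitz-type constraint $|v(x)-v(y)|\le \Lambda_{s,\infty}^D\,|x-y|^s$ in the limit.

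First I would prove the \emph{liminf inequality}: if $(v_p,\sigma_p)\to(v,\sigma)$ in the above topology, then $G_\infty(v,\sigma)\le\liminf_p G_p(v_p,\sigma_p)$. It suffices to treat the case where the $\liminf$ is finite, so along a subsequence $G_p(v_p,\sigma_p)=-\int_U v_p\sigma_p\,dx$ with $\|\sigma_p\|_{L^{p'}(U)}\le 1$, $v_p\in\widetilde W^{s,p}(U)$ and $[v_p]_{s,p}\le(\lambda_{s,p}^D)^{1/p}$. The point is that $\|\sigma_p\|_{L^{p'}(U)}\le 1$ with $p'\to 1$ forces the weak-$*$ limit $\sigma$ to satisfy $|\sigma|(U)\le 1$ (by lower semicontinuity of total variation and the fact that for any fixed $q'>1$, $\|\sigma_p\|_{L^{q'}}\le|U|^{1/q'-1/p'}\|\sigma_p\|_{L^{p'}}\le|U|^{1/q'}$ eventually, so the mass is uniformly equi-integrable up to the scale of $L^{q'}$). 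For the constraint on $v$: fixing $q<p$ and using $[v_p]_{s,q}\le |U|^{\text{(something)}}[v_p]_{s,p}$ (Hölder in the double integral over the bounded set $U\times\R^n$, using that $v_p$ vanishes off $U$), we get $[v_p]_{s,q}$ bounded, hence $v_p$ is bounded in $W^{s,q}(\R^n)$ for every $q$; passing to the limit in $q\to\infty$ after the limit in $p$ yields precisely $|v(x)-v(y)|\le \Lambda_{s,\infty}^D|x-y|^s$. Finally $-\int_U v_p\sigma_p\,dx\to-\int_U v\,d\sigma$ because $v_p\to v$ uniformly and $\sigma_p\rightharpoonup\sigma$ weakly-$*$; combining gives $G_\infty(v,\sigma)\le\liminf G_p(v_p,\sigma_p)$.

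Next I would construct the \emph{recovery sequence}: given $(v,\sigma)$ with $G_\infty(v,\sigma)<+\infty$, find $(v_p,\sigma_p)\to(v,\sigma)$ with $\limsup_p G_p(v_p,\sigma_p)\le G_\infty(v,\sigma)$. For the measure part, since $|\sigma|(U)\le 1$ one mollifies: set $\sigma_p=\rho_{\varepsilon_p}*\sigma$ restricted to $U$ (or a suitable renormalization), which lies in $L^{p'}(U)$ with $\|\sigma_p\|_{L^{p'}(U)}\to|\sigma|(U)\le 1$ if $\varepsilon_p\to 0$ slowly enough relative to $p$; then $\sigma_p\rightharpoonup\sigma$ and $\int v\sigma_p\to\int v\,d\sigma$. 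The delicate part is the function: $v$ satisfies $|v(x)-v(y)|\le\Lambda_{s,\infty}^D|x-y|^s$ and $v=0$ off $U$, but it need not satisfy the strict bound $[v]_{s,p}\le(\lambda_{s,p}^D)^{1/p}$. One takes $v_p=(1-\delta_p)v$ with $\delta_p\downarrow 0$ chosen so that $(1-\delta_p)[v]_{s,p}\le(\lambda_{s,p}^D)^{1/p}$; here one must check that $[v]_{s,p}^{1/p}$ does not blow up faster than $(\lambda_{s,p}^D)^{1/p}$ converges, which follows because the Hölder bound gives $[v]_{s,p}^{1/p}\le C_U\,\Lambda_{s,\infty}^D\cdot(\text{a factor}\to 1)$ for the admissible $v$ (integrating $|x-y|^{sp}/|x-y|^{n+sp}=|x-y|^{-n}$ over the bounded region and taking $p$-th roots), so $\limsup_p [v]_{s,p}^{1/p}\le \Lambda_{s,\infty}^D=\lim_p(\lambda_{s,p}^D)^{1/p}$, making a scaling factor $1-\delta_p\to 1$ work. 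Then $v_p\to v$ uniformly and $-\int_U v_p\sigma_p\,dx\to-\int_U v\,d\sigma=G_\infty(v,\sigma)$, giving the $\limsup$ inequality.

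The main obstacle I anticipate is the recovery-sequence bound on the function: making precise the estimate $\limsup_p [v]_{s,p}^{1/p}\le\Lambda_{s,\infty}^D$ for an arbitrary admissible $v$ — in particular one must exploit that $v$ is $s$-Hölder with the \emph{sharp} constant $\Lambda_{s,\infty}^D$ \emph{and} vanishes outside $U$, and that the localized singular integral $\int_{\overline U}\int_{\overline U}|x-y|^{-n}\,dx\,dy$ contributing the $p$-th power behaves well under taking roots. This is exactly the step where the geometry ($R$, the inradius) enters through $\Lambda_{s,\infty}^D=R^{-s}$, and it is essentially the $\Gamma$-limsup analogue of one half of the Lindgren--Lindqvist limit theorem; the cleaner route is to borrow their construction of near-optimal functions (e.g. truncated powers of $\mathrm{dist}(\cdot,\partial U)$) as the recovery sequence directly, rather than rescaling a fixed limit function.
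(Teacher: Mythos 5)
Your plan correctly identifies the two sides of $\Gamma$-convergence and follows the route the paper intends: the paper's own proof of this lemma is the single line ``It follows as in \cite{CDJ}'', and \cite{CDJ} verify exactly the $\liminf$ and recovery-sequence inequalities for the analogous functionals in the local setting. But there is a real gap in the estimate you flag as the ``main obstacle'', plus a notational slip that obscures it: the constraint in $G_p$ reads $[v]_{s,p}\le(\lambda^D_{s,p})^{1/p}$ with $(\lambda^D_{s,p})^{1/p}\to\Lambda^D_{s,\infty}$, so the quantity you must control is $[v]_{s,p}$ itself, not $[v]_{s,p}^{1/p}$; you need $\limsup_p[v]_{s,p}\le\Lambda^D_{s,\infty}$ (if $[v]_{s,p}$ has a finite positive limit, then $[v]_{s,p}^{1/p}\to 1$, which is irrelevant).

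The substantive error is the claim that ``the localized singular integral $\int_{\overline U}\int_{\overline U}|x-y|^{-n}\,dx\,dy$ \ldots behaves well''. It is infinite: $\int_{|z|<\delta}|z|^{-n}\,dz$ already diverges logarithmically. Hence the raw $s$-Hölder bound $|v(x)-v(y)|\le\Lambda^D_{s,\infty}|x-y|^s$ does \emph{not} imply $[v]_{s,p}<\infty$ (there exist $s$-Hölder functions supported in $U$ with $[v]_{s,p}=+\infty$ for every $p$), so the rescaling $v_p=(1-\delta_p)v$ of a generic admissible $v$ need not even lie in $\widetilde W^{s,p}(U)$. The repair is to mollify first: take a small dilation of $v$ into $U$ and a mollified $v_\epsilon$, which is Lipschitz, still $s$-Hölder with constant $\le\Lambda^D_{s,\infty}$, and vanishes off $U$. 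Then $[v_\epsilon]_{s,p}<\infty$ because near the diagonal the \emph{Lipschitz} bound gives $|v_\epsilon(x)-v_\epsilon(y)|^p/|x-y|^{n+sp}\le L_\epsilon^p|x-y|^{p(1-s)-n}$, integrable at $0$, and away from the diagonal the $L^\infty$ bound and $\int_{|z|\ge\delta}|z|^{-(n+sp)}\,dz<\infty$ suffice. For such $v_\epsilon$ one then verifies $[v_\epsilon]_{s,p}\to\sup_{x\ne y}|v_\epsilon(x)-v_\epsilon(y)|/|x-y|^s\le\Lambda^D_{s,\infty}$ as $p\to+\infty$ (this is $\|f\|_{L^p(\mu)}\to\|f\|_{L^\infty(\mu)}$ for $f(x,y)=|v_\epsilon(x)-v_\epsilon(y)|/|x-y|^s$, $d\mu=|x-y|^{-n}dxdy$; it holds since $f\in L^q(\mu)$ for some finite $q$ and $\{f>\lambda\}$ has positive finite $\mu$-measure for $\lambda<\|f\|_{\infty}$), and a diagonal argument in $(\epsilon,p)$ finishes. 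The same divergent weight $\iint|x-y|^{-n}$ undermines the Hölder comparison you sketch in the $\liminf$ direction; there it is cleaner to invoke the fractional Morrey embedding directly, as you correctly mention at the start: a uniform bound on $[v_p]_{s,p}$ gives equi-$s'$-Hölder continuity with $s'=s-n/p\to s$, which passes to the limit.
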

	\begin{proof}
		It follows as in \cite{CDJ}.
	\end{proof}

	Now, we let $f_p\colon\mathbb{R}^n\to\mathbb{R}$ defined as
	$$
	    f_p(x) \coloneqq (u_p(x))^{p-1},
	$$
	where $u_p$ is a nonnegative eigenfunction associated to $\lambda_{s,p}^D(U)$
	such that $\|u_p\|_{L^p(U)}=1.$	
	When we consider $f_p$ 
	as an element of $M(\overline{U})$ together with $u_p$ 
	we obtain a minimizer for $G_p$. The proof of this fact is immediate.

	\begin{lema} \label{lema.fp.minimiza}
		The pair $(f_p,u_p)$ minimizes $G_p$ in 
		$C(\overline{U})\times M(\overline{U})$ with
		$$
			G_p (f_p, u_p) =-1.
		$$
	\end{lema}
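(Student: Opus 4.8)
The plan is to verify directly that $(f_p,u_p)$ is an admissible pair for $G_p$ on which the functional takes the value $-1$, and then to show that $-1$ is a lower bound for $G_p$ on all of $C(\overline U)\times M(\overline U)$; together these give the assertion.

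For admissibility I would use that $u_p$ is a nonnegative minimizer of the Rayleigh quotient defining $\lambda_{s,p}^D$ normalized by $\|u_p\|_{L^p(U)}=1$, so that $u_p\in\widetilde{W}^{s,p}(U)$ and $[u_p]_{s,p}^p=\lambda_{s,p}^D$, i.e.\ $[u_p]_{s,p}=(\lambda_{s,p}^D)^{1/p}$; this is exactly the constraint on the $\widetilde{W}^{s,p}$-variable. For the $L^{p'}$-variable the relevant point is the identity $(p-1)p'=p$ with $p'=\nicefrac{p}{p-1}$, which gives
\[
\|f_p\|_{L^{p'}(U)}^{p'}=\int_U u_p^{(p-1)p'}\,dx=\int_U u_p^{p}\,dx=1 ,
\]
so $f_p\in L^{p'}(U)$ with unit norm. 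Hence $(f_p,u_p)$ falls in the finite branch of $G_p$ and
\[
G_p(f_p,u_p)=-\int_U u_p\,f_p\,dx=-\int_U u_p^{p}\,dx=-1 .
\]

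For the lower bound, if $G_p(v,\sigma)=+\infty$ there is nothing to prove, so I would assume $\sigma\in L^{p'}(U)$ with $\|\sigma\|_{L^{p'}(U)}\le 1$ and $v\in\widetilde{W}^{s,p}(U)$ with $[v]_{s,p}\le(\lambda_{s,p}^D)^{1/p}$. Hölder's inequality then yields $G_p(v,\sigma)=-\int_U v\,\sigma\,dx\ge -\|v\|_{L^p(U)}\,\|\sigma\|_{L^{p'}(U)}\ge -\|v\|_{L^p(U)}$. Finally, testing the infimum defining $\lambda_{s,p}^D$ with $v/\|v\|_{L^p(U)}$ (when $v\not\equiv0$; the case $v\equiv0$ is immediate) gives $\lambda_{s,p}^D\,\|v\|_{L^p(U)}^p\le[v]_{s,p}^p\le\lambda_{s,p}^D$, hence $\|v\|_{L^p(U)}\le1$ and therefore $G_p(v,\sigma)\ge -1$.

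I do not expect a genuine obstacle — consistent with the paper's remark that the proof is immediate. The only points that need a moment's care are the exponent bookkeeping $(p-1)p'=p$, which simultaneously makes $\|f_p\|_{L^{p'}(U)}=1$ and turns the pairing $\int_U u_p\,f_p$ into $\int_U u_p^{p}=1$, and the observation that the semi-norm constraint on the $\widetilde{W}^{s,p}$-variable is precisely what caps $\|v\|_{L^p(U)}$ at $1$, so that the Hölder estimate closes exactly at $-1$.
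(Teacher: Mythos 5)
Your argument is correct and is exactly the ``immediate'' proof the paper has in mind (the paper omits it): the normalization $\|u_p\|_{L^p(U)}=1$ together with $(p-1)p'=p$ gives admissibility and the value $-1$, while H\"older plus the constraint $[v]_{s,p}\le(\lambda_{s,p}^D)^{1/p}$ forces $\|v\|_{L^p(U)}\le 1$ and hence $G_p\ge -1$. One small point worth flagging: the paper writes the minimizing pair as $(f_p,u_p)$ even though $G_p(v,\sigma)$ is declared with the function $v$ first and the measure $\sigma$ second (compare the Neumann section, where the pair is correctly written $(u_p,f_p)$); you implicitly and correctly read it as $v=u_p$, $\sigma=f_p$.
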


	Now, let us show that we can extract a subsequence $p_n \to +\infty$ 
	such that $f_p$ and $u_p$ converge.

	\begin{lema} \label{lema.fp.converge} 
		There exists a sequence $p_n \to +\infty$ such that
		$$
			u_{p_n} \to u_\infty
		$$
		uniformly in $\R^n$. 
		This limit $u_\infty$ verifies
		$$
			| u_\infty (x) - u_\infty (y) |\le 
			\Lambda^D_{s,\infty} |x-y|^s, \quad x,y\in\R^n.
		$$
		Moreover, we have
		$$
			f_{p_n}  \stackrel{*}{\rightharpoonup}  f_\infty
		$$
		weakly-* in $ M(\overline{U})$ and $f_\infty$ is a  nonnegative 
		measure that verifies $f_\infty(\overline{U}) \le 1$.
	\end{lema}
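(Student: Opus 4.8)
The plan is to obtain the convergence of $u_{p_n}$ from a uniform Hölder-equicontinuity estimate together with a uniform bound on the sup-norms, and then apply Arzelà–Ascoli. First I would recall that $u_p \in \widetilde{W}^{s,p}(U)$ with $[u_p]_{s,p}^p = \lambda_{s,p}^D$ and $\|u_p\|_{L^p(U)} = 1$; the point is to upgrade this to a uniform (in $p$, for $p$ large) Hölder bound. Fix any $q > n/s$; for $p > q$ one can estimate the $W^{s,q}$-seminorm of $u_p$ on $U$ by Hölder's inequality in terms of $[u_p]_{s,p}$ and $|U|$, getting $[u_p]_{s,q} \le C\,[u_p]_{s,q\text{-type bound}} \le C\,(\lambda_{s,p}^D)^{1/p}\,|U|^{1/q - 1/p}$ up to dimensional constants, and since $(\lambda_{s,p}^D)^{1/p} \to \Lambda_{s,\infty}^D$ this is bounded uniformly in $p$. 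By the fractional Morrey embedding $W^{s,q}(U) \hookrightarrow C^{0,s - n/q}(\overline{U})$ (and because $u_p \equiv 0$ outside $U$, the embedding control extends to all of $\R^n$), we get $\{u_p\}_{p \text{ large}}$ bounded in $C^{0,\alpha}(\R^n)$ for $\alpha = s - n/q$; combined with $\|u_p\|_{L^\infty}$ being controlled (again via the same embedding, since $\|u_p\|_{L^q(U)} \le |U|^{1/q - 1/p}\|u_p\|_{L^p(U)} = |U|^{1/q-1/p}$), Arzelà–Ascoli gives a subsequence $p_n \to +\infty$ with $u_{p_n} \to u_\infty$ uniformly on $\R^n$ (using that all functions vanish on a neighborhood-complement of a fixed bounded set, so compactness on $\overline{U}$ suffices).

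Next I would identify the Hölder constant of the limit. For fixed $x, y$ and any $q$ as above, letting $p \to \infty$ in $|u_p(x) - u_p(y)| \le [u_p]_{s,q}\,(\text{Morrey constant})\,|x-y|^{s-n/q}$ is not directly sharp; instead the clean route is to pass to the limit in the $L^p$-type bound directly: for any measurable $A, B$ one has, by Jensen/Hölder on the double integral, a bound of the form $\fint_A \fint_B |u_p(x)-u_p(y)|\, \tfrac{dx\,dy}{\cdots} \lesssim [u_p]_{s,p} \cdot (\text{vol factors})^{1/p'}$, and optimizing as $A, B$ shrink to points $x, y$ while tracking the constants yields $|u_\infty(x) - u_\infty(y)| \le \big(\limsup_p (\lambda_{s,p}^D)^{1/p}\big)\,|x-y|^s = \Lambda_{s,\infty}^D\,|x-y|^s$; this is exactly the argument already carried out in \cite{LL} and \cite{CDJ} for the analogous limit, so I would cite it. Together with $u_\infty = 0$ outside $U$ this gives the asserted Hölder bound for $u_\infty$ on all of $\R^n$.

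Finally, for the measures: $f_p = u_p^{p-1} \ge 0$, so each $f_p$, viewed as the measure $f_p\,dx$ on $\overline{U}$, is nonnegative, and $f_p(\overline{U}) = \int_U u_p^{p-1}\,dx \le \big(\int_U u_p^p\,dx\big)^{(p-1)/p}|U|^{1/p} = |U|^{1/p} \to 1$ by Hölder, so $\{f_p\}$ has uniformly bounded total mass and hence is weak-$*$ precompact in $M(\overline{U})$. Passing to a further subsequence of the $p_n$ already chosen, $f_{p_n} \stackrel{*}{\rightharpoonup} f_\infty$; nonnegativity is preserved under weak-$*$ limits, and $f_\infty(\overline{U}) \le \liminf f_{p_n}(\overline{U}) \le \liminf |U|^{1/p_n} = 1$ by weak-$*$ lower semicontinuity of mass on the compact set $\overline{U}$. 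The main obstacle is the first step — producing the $p$-uniform Hölder estimate and correctly handling the Morrey/embedding constants so that they do not blow up as $p \to \infty$; everything after that (Arzelà–Ascoli, weak-$*$ compactness, lower semicontinuity) is routine, and the sharp Hölder constant for $u_\infty$ can be quoted from \cite{LL}.
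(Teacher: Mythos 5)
Your argument coincides with the paper's in all the parts that the paper actually writes out: the paper simply cites \cite{LL} for the uniform convergence of $u_{p_n}$ to a $\Lambda^D_{s,\infty}$--H\"older function (which you also defer to \cite{LL}), and it proves the $f_{p_n}$ part exactly via the H\"older estimate $\int_U f_p\,dx\le |U|^{1/p}$, weak-$*$ compactness of bounded measures on the compact set $\overline{U}$, and preservation of nonnegativity and the mass bound under weak-$*$ limits, which is identical to what you wrote. So this is essentially the same approach.

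One technical caveat in your sketch of the \cite{LL} step: the inequality you write, $[u_p]_{s,q}\le C\,(\lambda^D_{s,p})^{1/p}\,|U|^{1/q-1/p}$, cannot hold with the $W^{s,q}$-seminorm on the left, because applying H\"older with exponents $p/q$ and $p/(p-q)$ to the double integral would leave a factor $\iint |x-y|^{-n}\,dx\,dy$, which diverges at the diagonal. The correct move, as the paper itself carries out in the proof of Lemma~\ref{lema:conver} (and as in \cite{LL}), is to lower the order: one bounds $\llbracket u_p\rrbracket_{r,q}$ with $r=s-\nicefrac{n}{q}$, so that $n+rq=sq$ and after H\"older the remaining kernel is integrable; the Morrey embedding is then $W^{r,q}\hookrightarrow C^{0,r-n/q}=C^{0,s-2n/q}$, requiring $q>2n/s$ rather than $q>n/s$. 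Since you ultimately cite \cite{LL} for this step anyway, the imprecision does not affect the correctness of your overall argument, but it would if you tried to carry the H\"older computation through as written.
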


	\begin{proof}
		The convergence of $u_{p},$ via a subsequence, 
		is contained in \cite{LL}. 
		Concerning $f_{p_n}$ the conclusion follows 
		from the inequality
		\begin{equation}\label{pp}
			\int_U f_{p}\, dx \leq 
			\left( \int_U (u_{p})^{p}\, dx \right)^{\frac{p-1}{p}} 
			|U|^{\nicefrac1{p}} = |U|^{\nicefrac1{p}},
		\end{equation}
		that implies that $f_{p}$ is bounded in 
		$M(\overline{U})$ and hence we can extract a sequence 
		$p_n \to+ \infty$
		such that $f_{p_n} \stackrel{*}{\rightharpoonup} f_\infty$
		weakly-* in $ M(\overline{U})$. 
		That the limit $f_\infty$ is a nonnegative measure that 
		verifies $f_{\infty}(\overline{U}) \leq 1$ also follows from \eqref{pp}.
	\end{proof}

	From the main property of $\Gamma-$convergence we obtain the 
	following corollary.

	\begin{cor} \label{finfty.minimiza}
		The pair $(f_\infty, u_\infty)$ minimizes $G_\infty$ with
		$$	
			G_\infty (f_\infty, u_\infty)=-1.
		$$
	\end{cor}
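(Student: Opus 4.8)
The plan is to invoke the fundamental theorem of $\Gamma$-convergence, which asserts that if $G_p$ $\Gamma$-converges to $G_\infty$ and $(x_p)$ is a sequence of minimizers of $G_p$ that converges to some $x_\infty$ (in the relevant topology), then $x_\infty$ is a minimizer of $G_\infty$ and $G_p(x_p) \to G_\infty(x_\infty)$. All the ingredients are already in place: Lemma \ref{lema.gamma.conver} provides the $\Gamma$-convergence $G_p \to G_\infty$ with respect to the chosen topology on $C(\overline{U}) \times M(\overline{U})$ (uniform convergence in the first factor, weak-$*$ convergence in the second); Lemma \ref{lema.fp.minimiza} says the pair $(f_p, u_p)$ minimizes $G_p$ with $G_p(f_p, u_p) = -1$; and Lemma \ref{lema.fp.converge} says that along a subsequence $p_n \to +\infty$ we have $u_{p_n} \to u_\infty$ uniformly and $f_{p_n} \stackrel{*}{\rightharpoonup} f_\infty$, i.e. the pair $(f_{p_n}, u_{p_n})$ converges to $(f_\infty, u_\infty)$ in the product topology.

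Concretely, I would argue as follows. Fix the subsequence $p_n$ from Lemma \ref{lema.fp.converge}. Since $G_{p_n}$ still $\Gamma$-converges to $G_\infty$ along this subsequence, the liminf inequality of $\Gamma$-convergence gives
\[
G_\infty(f_\infty, u_\infty) \le \liminf_{n\to\infty} G_{p_n}(f_{p_n}, u_{p_n}) = \liminf_{n\to\infty} (-1) = -1.
\]
For the reverse inequality, take any competitor $(v, \sigma) \in C(\overline{U}) \times M(\overline{U})$. By the limsup (recovery sequence) property of $\Gamma$-convergence there exists a sequence $(v_n, \sigma_n) \to (v, \sigma)$ with $G_{p_n}(v_n, \sigma_n) \to G_\infty(v, \sigma)$. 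Since $(f_{p_n}, u_{p_n})$ minimizes $G_{p_n}$, we have $-1 = G_{p_n}(f_{p_n}, u_{p_n}) \le G_{p_n}(v_n, \sigma_n)$, and letting $n \to \infty$ yields $-1 \le G_\infty(v, \sigma)$. As $(v,\sigma)$ was arbitrary, $\inf G_\infty \ge -1$; combined with the previous display this shows $(f_\infty, u_\infty)$ is a minimizer of $G_\infty$ with $G_\infty(f_\infty, u_\infty) = -1$.

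There is no real obstacle here — this is the standard one-line consequence of $\Gamma$-convergence plus convergence of minimizers, which is exactly why the paper labels it a corollary. The only point deserving a word of care is that $\Gamma$-convergence along the full sequence $p \to +\infty$ descends to $\Gamma$-convergence along the subsequence $p_n$ (immediate from the definition, since both the liminf and recovery-sequence conditions are inherited by subsequences), and that the topology in which $(f_{p_n}, u_{p_n}) \to (f_\infty, u_\infty)$ is precisely the one with respect to which the $\Gamma$-limit was taken — which is guaranteed because Lemma \ref{lema.gamma.conver} and Lemma \ref{lema.fp.converge} both use uniform convergence on $C(\overline{U})$ and weak-$*$ convergence on $M(\overline{U})$. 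One should also note in passing that the equicoercivity needed to know a converging subsequence of minimizers exists is supplied directly by Lemma \ref{lema.fp.converge} rather than needing to be derived separately.
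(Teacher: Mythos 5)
Your proof is correct and is exactly what the paper has in mind when it writes ``From the main property of $\Gamma$-convergence we obtain the following corollary'' before stating the result: you spell out the standard liminf/recovery-sequence argument, using Lemma \ref{lema.gamma.conver} for the $\Gamma$-limit, Lemma \ref{lema.fp.minimiza} for $G_p(f_p,u_p)=-1$, and Lemma \ref{lema.fp.converge} for the convergence of the minimizing pairs in the relevant product topology. Your parenthetical remarks (restriction of $\Gamma$-convergence to subsequences, matching of topologies, equicoercivity being supplied directly by the convergence lemma) are accurate and address the only points one might worry about.
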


	Now we are ready to prove Theorem \ref{teo.Dir.Intro}.

	\begin{proof}[Proof of Theorem \ref{teo.Dir.Intro}]
		As $(f_\infty, u_\infty)$ minimizes $G_\infty$ we obtain that
		$$
			\Big(f_\infty, \frac{u_\infty}{\Lambda^D_{\infty,s}} \Big)
		$$ 
		minimizes
		$$
		-\int_U v\,d\sigma,
		$$
		with $(v,\sigma)$ belonging to
		$$
			A\coloneqq
			\left\{ (v,\sigma)\in 
			\widetilde{W}^{s,\infty}(U)\times
			M(\overline{U})
			\colon|\sigma|(U)\le 1,
			 | v(x) - v(y) |\le d(x,y)^s 
			\right\}.
		$$
		Then
		$$
			\begin{array}{rl}
				\displaystyle
				\frac{1}{\Lambda^D_{s,\infty}} &
				\displaystyle = \frac{1}{\Lambda^D_{s,\infty}} 
				\int_U u_\infty\, df_\infty \\[10pt]
				& \displaystyle = \max_{(v,\sigma)\in A}
				 \int_U v\,d\sigma  \\[10pt]
				& \displaystyle = \max_{\mu\in P(\overline{U})} 
				\max_{| w(x) - w(y) |\le d(x,y)^s} \int_U w \, d\mu \\[10pt]
				& \displaystyle = \max_{\mu\in P(\overline{U})} 
				W_s (\mu, P(\partial U)),
			\end{array}
		$$
		as we wanted to show.
\end{proof}

\section{The Neumann case} \label{sect-New}

	Again, we follow ideas from \cite{CDJ}, see also \cite{RoSaint}.
	Let $u_p$ be an extremal for $\lambda^N_{p,s}$ 
	(that is, a minimizer for \eqref{DefLambdapsNeu})
	normalized by $\|u_p\|_{L^p(U)}=1$.
	Then $f_p\coloneqq |u_p|^{p-2}u_p \in L^{p'}(U)$ (where $p'=\frac{p}{p-1}$)
	satisfies
	\begin{equation}\label{Propfp}
 		\|f_p\|_{L^{p'} (U)}=1 \quad \text{and} \quad \int_U f_p \, dx =0.
	\end{equation}
	The first step consists in extracting from $\{f_p\}_{p>1}$ 
	a subsequence converging weakly to some measure 
	$f_\infty \in M(\overline{U})$, 
	the weak convergence meaning here that
	$$
		\lim_{p\to +\infty}\int_{\overline{U}}\phi f_p\,dx = 
		\int_{\overline{U}}\phi\,df_\infty$$
	for any $\phi\in C(\overline{U})$.

	\begin{lema}  
		Up to a subsequence, the measures $f_p$ converge weakly in 
		measure in $\overline{U}$ to some measure $f_\infty$ supported in 
		$\overline{U}$ satisfying
		\begin{equation}\label{Propfinfinity}
 			f_\infty(\overline U) = 0\quad \text{ and } \quad 
 			|f_\infty|(\overline U) = 1.
		\end{equation}
	\end{lema}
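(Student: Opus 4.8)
The plan is to combine weak-$*$ compactness in $M(\overline U)$ with a ``no concentration--cancellation'' argument resting on the uniform convergence of the eigenfunctions $u_p$ recalled in the Introduction (the analogue of \cite{DelpeSal}). First I would note that, by Hölder's inequality and \eqref{Propfp},
\[
\|f_p\|_{L^1(U)}=\int_U|u_p|^{p-1}\,dx\le\Big(\int_U|u_p|^p\,dx\Big)^{\frac{p-1}{p}}|U|^{1/p}=|U|^{1/p},
\]
which stays bounded as $p\to+\infty$, so $\{f_p\}_{p>1}$ is bounded in $M(\overline U)=C(\overline U)^*$ and, up to a subsequence, $f_{p_n}\stackrel{*}{\rightharpoonup}f_\infty$; since each $f_p$ is carried by the compact set $\overline U$, so is $f_\infty$. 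Testing the convergence against $\phi\equiv1\in C(\overline U)$ together with $\int_U f_p\,dx=0$ gives $f_\infty(\overline U)=0$, while lower semicontinuity of the total variation under weak-$*$ convergence gives $|f_\infty|(\overline U)\le\liminf_n\|f_{p_n}\|_{L^1(U)}\le\liminf_n|U|^{1/p_n}=1$.

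It remains to prove $|f_\infty|(\overline U)\ge1$. I would first check that $\|u_p\|_{L^\infty(U)}\to1$: the bound $1=\int_U|u_p|^p\,dx\le\|u_p\|_{L^\infty(U)}^p\,|U|$ gives $\|u_p\|_{L^\infty(U)}\ge|U|^{-1/p}\to1$; on the other hand, passing to a subsequence along which $u_p\to u_\infty$ uniformly on $\overline U$ and letting $p\to+\infty$ in the Hölder estimate $\|u_p\|_{L^q(U)}\le|U|^{1/q-1/p}$ (valid for $p\ge q$) yields $\|u_\infty\|_{L^q(U)}\le|U|^{1/q}$, and then $q\to+\infty$ gives $\|u_\infty\|_{L^\infty(U)}\le1$, so $\|u_p\|_{L^\infty(U)}\to\|u_\infty\|_{L^\infty(U)}=1$. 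Inserting this into $1=\int_U|u_p|^p\,dx\le\|u_p\|_{L^\infty(U)}\int_U|u_p|^{p-1}\,dx$ gives $\|f_p\|_{L^1(U)}=\int_U|u_p|^{p-1}\,dx\to1$, so the total mass of $|f_p|$ tends to $1$; the difficulty is that part of it could a priori be lost by cancellation in the weak-$*$ limit.

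To rule this out, split $f_p=f_p^+-f_p^-$ with $f_p^\pm=(u_p^\pm)^{p-1}\ge0$, so that $|f_p|=f_p^++f_p^-$ and $\|f_p^+\|_{L^1(U)}+\|f_p^-\|_{L^1(U)}=\|f_p\|_{L^1(U)}$, and pass along a further subsequence to $f_p^\pm\stackrel{*}{\rightharpoonup}f_\infty^\pm\ge0$, so that $f_\infty=f_\infty^+-f_\infty^-$ and, testing against $1$, $f_\infty^\pm(\overline U)=\lim_n\|f_{p_n}^\pm\|_{L^1(U)}$. For $\delta\in(0,\tfrac13)$, on the open set $O_\delta:=\{u_\infty<1-2\delta\}$ one has $O_\delta\subseteq\{u_p<1-\delta\}$ for $p$ large (uniform convergence), hence $f_p^+(O_\delta)\le\int_{O_\delta}(u_p^+)^{p-1}\,dx\le(1-\delta)^{p-1}|U|\to0$; thus $f_\infty^+(O_\delta)=0$, i.e. $f_\infty^+$ is supported in the closed set $\{u_\infty\ge1-2\delta\}$, and letting $\delta\to0$ gives $\operatorname{supp}f_\infty^+\subseteq\{u_\infty=1\}$. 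Symmetrically, $\operatorname{supp}f_\infty^-\subseteq\{u_\infty=-1\}$ (either set possibly empty). Since $\{u_\infty=1\}$ and $\{u_\infty=-1\}$ are disjoint, $f_\infty^+\perp f_\infty^-$, so $|f_\infty|=f_\infty^++f_\infty^-$ and
\[
|f_\infty|(\overline U)=f_\infty^+(\overline U)+f_\infty^-(\overline U)=\lim_n\big(\|f_{p_n}^+\|_{L^1(U)}+\|f_{p_n}^-\|_{L^1(U)}\big)=\lim_n\|f_{p_n}\|_{L^1(U)}=1.
\]
Combined with the first paragraph this yields $|f_\infty|(\overline U)=1$. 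I expect this last step to be the real obstacle: the compactness and the two normalizations $f_\infty(\overline U)=0$, $|f_\infty|(\overline U)\le1$ are soft, whereas pinning down where the mass of $f_p$ concentrates and showing it does not cancel is precisely where the uniform convergence of the eigenfunctions is indispensable — a merely weakly convergent sequence could drop total variation in the limit.
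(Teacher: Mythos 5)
Your proof is correct, but the route you take for the inequality $|f_\infty|(\overline U)\ge 1$ is more elaborate than what the paper does. After establishing $\int_U|f_p|\,dx\to 1$ (your first two paragraphs match the paper), the paper's argument is simply to pass to the limit in $1=\int_U u_p f_p\,dx$: since $u_p\to u_\infty$ uniformly and $f_p\stackrel{*}{\rightharpoonup}f_\infty$ with bounded total variation, $\int_U u_p f_p\,dx\to\int_{\overline U}u_\infty\,df_\infty$, so $1=\int_{\overline U}u_\infty\,df_\infty\le\|u_\infty\|_{L^\infty(U)}|f_\infty|(\overline U)=|f_\infty|(\overline U)$, which combined with lower semicontinuity of the total variation gives equality. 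You instead split $f_p=f_p^+-f_p^-$, show that the limits $f_\infty^\pm$ concentrate on the disjoint sets $\{u_\infty=1\}$ and $\{u_\infty=-1\}$, deduce mutual singularity, and conclude that no mass cancels. This is a valid and somewhat more structural argument (it actually identifies where the Jordan decomposition of $f_\infty$ lives, which is extra information the lemma does not require), but it is longer and needs the extra subsequence extraction for $f_p^\pm$ and a portmanteau-type estimate on open sets, whereas the paper's one-line duality test against $u_\infty$ is both shorter and closer in spirit to how $f_\infty$ is used afterwards (as the measure paired with $u_\infty$ in the $\Gamma$-limit functional $G_\infty$). In short: correct, but you reproved the lower bound by a heavier ``no cancellation'' argument where a single test function suffices.
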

	\begin{proof}
		We claim that
		\begin{equation}\label{Propfp2}
 			\lim_{p\to +\infty} \int_U |f_p|\,dx = 1.
		\end{equation}
		First, in view of \eqref{Propfp}, we have that
		$$ 
			\int_U |f_p|\,dx \le \|f_p\|_{L^{p'}(U)}|U|^{1-1/p'}=|U|^{1-1/p'}
			\to 1 \qquad \text{as } p\to +\infty
		 $$
		and then, recalling that $u_p\to u$ in $C(\overline{U})$
		with $\|u\|_{L^\infty(U)}  =1$,
		$$ 
			1 = \int_U u_p f_p\,dx \le 
			\|u_p\|_{L^\infty(U)} \|f_p\|_{L^1(U)} 
			= (1+o(1))\|f_p\|_{L^1(U)}. 
		$$

		It follows in particular that the measures $|f_p|$ are
		bounded in $M(\overline{U})$ independently of $p$. 
		Since $\overline{U}$ is compact, 
		we can then extract from this sequence a subsequence converging 
		weakly to some measure $f_\infty\in M(\overline{U})$. Passing to 
		the limit in \eqref{Propfp} and \eqref{Propfp2} gives 
		\eqref{Propfinfinity}.
	\end{proof}

	Consider the functionals 
	$G_p, G_\infty\colon C(\overline{U})\times M(\overline{U})\to 
	\R\cup\{+\infty\} $ defined by
	\begin{equation*}
 		G_p(v,\sigma) =
 		\begin{cases}
  			\displaystyle -\int_U v \sigma\, dx &\text{ if } 
  			\sigma\in L^{p'}(U),\, 
  			\|\sigma\|_{L^{p'}(U)}\le 1,\, \int_U \sigma\,  dx = 0, \\
   			&\text{ and }  
   				v\in W^{s,p}(U),\, \llbracket v\rrbracket_{s,p}
   				\le (\lambda_{p,s}^N)^{1/p}, \\
  					+\infty & \text{ otherwise, }
		\end{cases}
	\end{equation*}
	and
	\begin{equation*}
 		G_\infty(v,\sigma) =
 		\begin{cases}
  			\displaystyle  -\int_U v\,d\sigma,& \text{ if } 
  			\sigma\in M(\overline{U}),\, 
  			|\sigma|(\overline{U})\le 1,\, \sigma(\overline{U}) = 0, \\
   			& \text{ and } 
   			v\in W^{s,\infty}(U),\, 
   			| v(x) - v(y) |\le \Lambda^N_{\infty,s} d(x,y)^s, \\
  				+\infty & \text{ otherwise. }
		\end{cases}
	\end{equation*}
	
	Remark that these functionals are similar to the ones considered for the Dirichlet case but the spaces involved change. In fact, here we consider $W^{s,p}(U)$ instead of $\widetilde{W}^{s,p}(U)$ (that encodes the fact that we are considering functions that vanish outside $U$ when dealing with the Dirichlet problem).

As for the Dirichlet case,
	we can prove as in \cite{CDJ,RoSaint} that
	$G_\infty$ is the limit of the $G_p$ in the sense of $\Gamma$-convergence:

	\begin{lema}
		The functionals $G_p$ converge in the sense of $\Gamma$-convergence to 
		$G_\infty$.
	\end{lema}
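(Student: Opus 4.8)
The plan is to verify the two defining conditions of $\Gamma$-convergence for the topology at hand (uniform convergence on $C(\overline{U})$, weak-$*$ convergence on $M(\overline{U})$): the $\Gamma$-$\liminf$ inequality $G_\infty(v,\sigma)\le\liminf_p G_p(v_p,\sigma_p)$ whenever $(v_p,\sigma_p)\to(v,\sigma)$, and the existence, for every $(v,\sigma)$, of a recovery sequence $(v_p,\sigma_p)\to(v,\sigma)$ with $\limsup_p G_p(v_p,\sigma_p)\le G_\infty(v,\sigma)$. Since these functionals differ from those of the Dirichlet case only in that $W^{s,p}(U)$ replaces $\widetilde{W}^{s,p}(U)$ and in the two additional linear constraints $\int_U\sigma\,dx=0$ and $\sigma(\overline U)=0$, the argument runs parallel to the one behind Lemma~\ref{lema.gamma.conver}, following \cite{CDJ,RoSaint}; I only indicate the main points.

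For the $\Gamma$-$\liminf$ inequality I would assume $\ell:=\liminf_p G_p(v_p,\sigma_p)<+\infty$ and, along a subsequence, that $G_p(v_p,\sigma_p)\to\ell$ with finite values; then $\sigma_p\in L^{p'}(U)$, $\|\sigma_p\|_{L^{p'}(U)}\le1$, $\int_U\sigma_p\,dx=0$, and $v_p\in W^{s,p}(U)$, $\llbracket v_p\rrbracket_{s,p}\le(\lambda^N_{p,s})^{1/p}$. The constraints pass to the limit: from $|\sigma_p|(\overline U)=\|\sigma_p\|_{L^1(U)}\le|U|^{1-1/p'}\to1$ and the weak-$*$ lower semicontinuity of the total variation one gets $|\sigma|(\overline U)\le1$, and testing the weak-$*$ convergence against the constant function $1\in C(\overline U)$ gives $\sigma(\overline U)=\lim_p\int_U\sigma_p\,dx=0$. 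For the seminorm constraint I would use the usual fractional argument: for fixed $q<\infty$ and $s'<s$, Hölder's inequality on $U\times U$ yields $\llbracket v_p\rrbracket_{s',q}\le C_p(q,s',U)\,\llbracket v_p\rrbracket_{s,p}$ with $\sup_p C_p(q,s',U)<\infty$ (the relevant power of $d(x,y)$ is integrable near the diagonal exactly because $s'<s$ and $U$ is bounded); since $v_p\to v$ uniformly, Fatou gives $\llbracket v\rrbracket_{s',q}\le\liminf_p\llbracket v_p\rrbracket_{s',q}\le C(q,s',U)\,\Lambda^N_{\infty,s}$, and letting first $q\to\infty$ (where $C(q,s',U)\to\diam_d(U)^{s-s'}$) and then $s'\uparrow s$, using $(\lambda^N_{p,s})^{1/p}\to\Lambda^N_{\infty,s}$, produces $|v(x)-v(y)|\le\Lambda^N_{\infty,s}\,d(x,y)^s$. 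Hence $(v,\sigma)$ is in the domain of $G_\infty$, and since $v_p\to v$ uniformly while $\sigma_p\rightharpoonup\sigma$ with $|\sigma_p|(\overline U)$ bounded, $\int_U v_p\sigma_p\,dx\to\int_U v\,d\sigma$, so $\ell=-\int_U v\,d\sigma=G_\infty(v,\sigma)$.

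For the recovery sequence, given $(v,\sigma)$ in the domain of $G_\infty$, I would build the measure component by first pushing the mass of $\sigma$ slightly into the interior of $U$ and truncating to a compact subset (using $\overline U=\overline{\mathrm{int}\,U}$ for smooth $U$), obtaining $\sigma^{(k)}$ compactly supported in $U$ with $\sigma^{(k)}(U)=0$, $|\sigma^{(k)}|(U)\le1$, $\sigma^{(k)}\rightharpoonup\sigma$, then mollifying each $\sigma^{(k)}$ inside $U$; a diagonal choice $\sigma_p$, rescaled by a factor tending to $1$ (since the $L^{p'}(U)$ norm of a fixed function approaches its $L^1(U)$ norm only as $p'\downarrow1$), meets every constraint of $G_p$ and still converges weakly-$*$ to $\sigma$. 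For the function component the elementary fact to use is that any Lipschitz $w\in W^{s,\infty}(U)$ satisfies $\limsup_p\llbracket w\rrbracket_{s,p}\le\llbracket w\rrbracket_{s,\infty}$: splitting $U\times U$ along $\{d(x,y)=\rho\}$, with $\rho$ fixed the Lipschitz bound makes the near-diagonal part contribute $\mathrm{Lip}_d(w)\,\rho^{1-s}$ in the limit $p\to\infty$ and the $s$-Hölder bound makes the far part contribute $\llbracket w\rrbracket_{s,\infty}$, and then one lets $\rho\to0$. So it would suffice to approximate $v$ uniformly by Lipschitz functions with $s$-Hölder constant at most $\Lambda^N_{\infty,s}+o(1)$ and to rescale them by $(\lambda^N_{p,s})^{1/p}/\Lambda^N_{\infty,s}\to1$; with $(v_p,\sigma_p)$ so chosen, $G_p(v_p,\sigma_p)=-\int_U v_p\sigma_p\,dx\to-\int_U v\,d\sigma=G_\infty(v,\sigma)$.

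The hard part is this last approximation. In the local setting of \cite{CDJ,RoSaint} an $L^\infty$ gradient bound is automatically an $L^p$ bound, so one merely takes $v_p$ proportional to $v$; here, by contrast, an $s$-Hölder function need not have finite Gagliardo $(s,p)$-seminorm, so $v$ has to be genuinely regularized, and — because $d$ is only assumed equivalent to, not equal to, the Euclidean distance — the regularization must be carried out without inflating the sharp $s$-Hölder constant. I would handle this by a mollification/Lipschitz-truncation argument in which the regularization scale is tuned to $p$, exactly along the lines of \cite{CDJ,RoSaint}; all remaining verifications are routine.
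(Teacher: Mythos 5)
Your $\Gamma$-$\liminf$ half is correct, and it is essentially the same seminorm-interpolation argument used in the paper's Lemma~\ref{lema:conver}; your reduction of the recovery sequence to a Lipschitz approximation that does not increase the $d$-$s$-H\"older constant correctly isolates the one step that is genuinely new relative to the local references the paper defers to. The gap is the closing claim that this step goes through ``exactly along the lines of \cite{CDJ,RoSaint}; all remaining verifications are routine.'' That is not so: those papers never regularize $v$ (there $v$ is already Lipschitz and one just rescales), Euclidean mollification inflates the $d$-$s$-H\"older constant by the \emph{fixed} constant of equivalence between $d$ and $|\cdot|$ rather than by $1+o(1)$, so no tuning of the mollification scale repairs it; and the naive inf-convolution $\inf_y\{v(y)+\epsilon^{-1}d(x,y)\}$ gives $d$-Lipschitz regularity but no control at all on the resulting $s$-H\"older seminorm. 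So as written the last paragraph is a real hole, not a routine verification.

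It can be filled by an inf-convolution adapted to the concave kernel $t\mapsto t^s$. With $L\coloneqq\llbracket v\rrbracket_{s,\infty}$, set
\[
v_\delta(x)\coloneqq\inf_{y\in\overline U}\Bigl[\,v(y)+L\bigl((d(x,y)+\delta)^s-\delta^s\bigr)\Bigr].
\]
Taking $y=x$ gives $v_\delta\le v$; the bound $|v(y)-v(x)|\le Ld(x,y)^s$ together with $(a+\delta)^s-a^s-\delta^s\ge-\delta^s$ gives $v_\delta\ge v-L\delta^s$, hence $v_\delta\to v$ uniformly; the mean value theorem gives that each slice $x\mapsto L(d(x,y)+\delta)^s$ is $Ls\delta^{s-1}$-Lipschitz in $d$, and hence so is $v_\delta$, which is therefore Lipschitz in $|\cdot|$ by equivalence; and the elementary inequality $|a^s-b^s|\le|a-b|^s$ shows each slice, and hence the infimum, is $d$-$s$-H\"older with constant $\le L$. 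With $v_\delta$ in hand, your splitting argument gives $\limsup_{p}\llbracket v_\delta\rrbracket_{s,p}\le\llbracket v_\delta\rrbracket_{s,\infty}\le L$, and a diagonal choice in $(\delta,p)$ combined with the scaling by $(\lambda^N_{p,s})^{1/p}/\Lambda^N_{\infty,s}\to1$ and your construction for the measure component produces the recovery sequence. Aside from this, your proof matches the argument the paper defers to.
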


	The proof is similar as that of Proposition 3.7 in \cite{CDJ} 
	and hence we omit it.	As a corollary we obtain that

	\begin{lema} 
		Let $u_p$ be an extremal for $\lambda^N_{p,s}$, then 
		$(u_p,f_p)$ is a minimizer for $G_p$, and any limit 
		$(u_\infty,f_\infty)$ along a subsequence 
		$p_j\to+ \infty$  is a minimizer for 
		$G_\infty$, with
		$$	G_\infty(u_\infty,f_\infty) = \lim_{p\to+ \infty} G_p(u_p,f_p) 
 			= -1.
		$$
	\end{lema}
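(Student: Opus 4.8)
The strategy is to combine three ingredients that are already available: (i) the properties of $f_p$ collected in \eqref{Propfp}, (ii) the normalization $\llbracket u_p\rrbracket_{s,p}\le(\lambda^N_{p,s})^{1/p}$ of an extremal, and (iii) the $\Gamma$-convergence $G_p\xrightarrow{\Gamma}G_\infty$ together with the compactness already established (weak-$*$ convergence of $f_p$ to $f_\infty$ with $|f_\infty|(\overline U)=1$, $f_\infty(\overline U)=0$, and uniform convergence $u_p\to u_\infty$ to an $s$-H\"older function with constant $\Lambda^N_{s,\infty}$). First I would check that $(u_p,f_p)$ lies in the admissible set defining $G_p$: by \eqref{Propfp} we have $\|f_p\|_{L^{p'}(U)}=1$ and $\int_U f_p\,dx=0$, and since $u_p$ is an extremal, $u_p\in W^{s,p}(U)$ with $\llbracket u_p\rrbracket_{s,p}=(\lambda^N_{p,s})^{1/p}$. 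Hence $G_p(u_p,f_p)=-\int_U u_p f_p\,dx=-\int_U|u_p|^p\,dx=-1$.

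The key point is that $-1$ is in fact the \emph{minimum} of $G_p$. For any admissible pair $(v,\sigma)$ one has, by H\"older's inequality with exponents $p,p'$,
\[
-\int_U v\sigma\,dx \;\ge\; -\|v\|_{L^p(U)}\,\|\sigma\|_{L^{p'}(U)}\;\ge\;-\|v\|_{L^p(U)}.
\]
So it suffices to bound $\|v\|_{L^p(U)}$ from above by $1$ for admissible $v$. This is exactly the content of the definition of $\lambda^N_{p,s}$: since $\int_U\sigma\,dx=0$ we may, after replacing $v$ by $v-c$ for a suitable constant $c$ (which does not change $\llbracket v\rrbracket_{s,p}$ nor $\int_U v\sigma\,dx$), assume $\int_U|v|^{p-2}v\,dx=0$ — more precisely, I would argue that the relevant constraint is the one that makes $v$ competitive for $\lambda^N_{p,s}$; the constraint $\int_U\sigma=0$ lets us subtract the mean without penalty. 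Actually the cleanest route avoids choosing $c$: for admissible $(v,\sigma)$ write $w=v-c$ with $c$ chosen so that $\int_U |w|^{p-2}w\,dx=0$; then $\int_U v\sigma = \int_U w\sigma$ because $\int_U\sigma=0$, and $\llbracket w\rrbracket_{s,p}=\llbracket v\rrbracket_{s,p}\le(\lambda^N_{p,s})^{1/p}$, so $w\in\mathcal C$ after rescaling and the definition of $\lambda^N_{p,s}$ gives $\llbracket w\rrbracket_{s,p}^p\ge\lambda^N_{p,s}\|w\|_{L^p(U)}^p$, i.e. $\|w\|_{L^p(U)}\le 1$. Combining, $-\int_U v\sigma\,dx\ge-\|w\|_{L^p(U)}\ge-1$, so indeed $\min G_p=-1$, attained at $(u_p,f_p)$.

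For the limit statement, I would invoke the fundamental property of $\Gamma$-convergence: if $G_p\xrightarrow{\Gamma}G_\infty$, $(u_p,f_p)$ minimizes $G_p$, and $(u_p,f_p)\to(u_\infty,f_\infty)$ along a subsequence in the chosen topologies (uniform on $C(\overline U)$, weak-$*$ on $M(\overline U)$), then $(u_\infty,f_\infty)$ minimizes $G_\infty$ and $G_\infty(u_\infty,f_\infty)=\lim_p G_p(u_p,f_p)=-1$. The compactness guaranteeing convergence of a subsequence was established in the two preceding lemmas, and one should note that the weak-$*$ limit $f_\infty$ automatically satisfies the constraints $|f_\infty|(\overline U)\le 1$ and $f_\infty(\overline U)=0$ by lower semicontinuity of the total variation and by testing against the constant function $1$, while $u_\infty$ satisfies the $s$-H\"older bound with constant $\Lambda^N_{s,\infty}$ by passing to the limit in the pointwise inequalities; hence $(u_\infty,f_\infty)$ is admissible for $G_\infty$.

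The main obstacle I anticipate is the step hidden in the lower bound $\min G_p=-1$: one must be careful that subtracting a constant from $v$ to enforce the zero-mean-type constraint is legitimate — it leaves both $\llbracket v\rrbracket_{s,p}$ and $\int_U v\sigma\,dx$ unchanged (the latter precisely because $\int_U\sigma\,dx=0$), and one must choose the constant correctly to land in $\mathcal C$ after normalization. The other point requiring a word of care is that $\Gamma$-convergence by itself only gives $\liminf$-type information; the equality $G_\infty(u_\infty,f_\infty)=-1$ uses that the minimizers form a \emph{precompact} sequence, which is why the preceding compactness lemmas are essential and must be cited. Since the paper states the proof is "similar to Proposition 3.7 in \cite{CDJ}", I would keep this short and refer there for the routine verification of admissibility and for the $\Gamma$-liminf/limsup estimates.
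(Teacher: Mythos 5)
Your proposal is correct and follows essentially the same route as the paper: you show $(u_p,f_p)$ is admissible with $G_p(u_p,f_p)=-\int_U|u_p|^p\,dx=-1$, then for an arbitrary admissible $(v,\sigma)$ you subtract the constant $c$ (the paper's $\bar v$) making $\int_U|v-c|^{p-2}(v-c)\,dx=0$, use $\int_U\sigma\,dx=0$ to replace $v$ by $w=v-c$ in the pairing, and combine H\"older's inequality with the variational characterization of $\lambda^N_{p,s}$ to get $G_p(v,\sigma)\ge-\|w\|_{L^p(U)}\ge-1$; finally you invoke the fundamental theorem of $\Gamma$-convergence (together with the already-established precompactness of $(u_p,f_p)$) exactly as the paper does via its explicit $\liminf/\limsup$ chain.
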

	\begin{proof}
		Notice that the pair $(u_p,f_p)$ is a minimizer of $G_p$. Indeed, given a 
		pair $(v,\sigma)$ admissible for $G_p$ take $\bar v\in\R$ such that $$
		\int_U |v-\bar v|^{p-2}(v-\bar v)\,dx=0.$$
		Then, recalling that $\int_U\sigma\, dx=0$ and the definition of 
		$\lambda^N_{p,s}$, we have
		\begin{equation*}
		\begin{split}
	 		G_p(v,\sigma) & = -\int_U (v-\bar v)\sigma\, dx  \\
			& \ge - 
	 		\|v-\bar v\|_{L^p(U)}\|
	 		\sigma\|_{L^{p'}(U)}    \\
	        & \ge -(\lambda_{p,s}^N)^{-1/p} \llbracket v-\bar v\rrbracket_{s,p}   \\
	        & \ge -1 = G_p(u_p,f_p).
		\end{split}
		\end{equation*}
		Moreover $(u_p,f_p)\to (u_\infty,f_\infty)$ along a sequence 
		$p_j \to+ \infty$. Then, it follows that
		$$ 
			\liminf_{p\to +\infty} (\inf\,G_p) = \liminf_{p\to +\infty}
				G_p(u_p,f_p)\ge G_\infty(u_\infty,f_\infty)\ge \inf_B
				G_\infty 
		$$
		where $B$ is the set of all pairs 
		$(v,\sigma)\in W^{s,\infty}(U)\times M(\overline{U})$
		such that
		\[
			|\sigma|(\overline{U})\le1,\, \sigma(\overline{U})=0, 
			\text{ and }
			|v(x)-v(y) |\le
			\Lambda^N_{\infty,s} d(x,y)^s.
		\]
		Moreover, the $\limsup$ property implies that
		$$
			\limsup_{p \to+ \infty} \left(\inf_B\,G_p\right) 
			\le \inf_B\,G_\infty.
		$$
		Hence
		$$ 
			\lim_{p\to+\infty}\inf_B G_p=\lim_{p\to\infty} G_p(u_p,f_p)=
			G_\infty(u_\infty,f_\infty)
   			= \inf_B G_\infty.
   		$$
	\end{proof}

	We can now relate $\Lambda_{s,\infty}^N$ to $W_s$. 
	Recall that if $\sigma\in M(\overline{U})$, 
	then $\sigma^\pm\in M(\overline{U})$ denote the positive and 
	negative part of $\sigma$. In particular, $\sigma=\sigma^+ - 
	\sigma^-$, and $|\sigma|=\sigma^++\sigma^-$.

	\begin{proof}[Proof of Theorem \ref{teo.Neu.Intro}]
		The conditions $\sigma(\overline{U})=0$ and $|\sigma|(\overline{U})=1$ are 
		equivalent to 
		$$
			\sigma^+(\overline{U})=\sigma^-(\overline{U})=1/2.
		$$
		We can therefore rewrite the fact that the pair $(u_\infty,f_\infty)$ 
		is a minimizer of $G_\infty$ as
		\begin{equation*}
 			1  = \max_{\sigma\in M_{\nicefrac12}}\,
   			\max_{v\in 
   			F_{\Lambda^N_{s,\infty}}} \int_U v\, d(\sigma^+-\sigma^-),
		\end{equation*}
		where
		\begin{align*}
			 M_t&=\left\{\sigma\in M(\overline{U})\colon 
		 		\sigma^+(U)=\sigma^+(U)=t\right\},\\
		\end{align*}
		and
		\begin{align*}
		 	F_R&=\left\{v\in W^{s,\infty}(U)\colon |v(x)-v(y)| \le 
   			R\, d(x,y)\right\},
		\end{align*}
		that is,
		\begin{equation*}
 			\dfrac{2}{\Lambda^N_{\infty,s}}
 			= \max_{\sigma\in M_1} 
   			\max_{v\in F_1} 
   			\int_U v\, d(\sigma^+-\sigma^-).
		\end{equation*}
		Then, we obtain the conclusion \eqref{MK.Neu},
 recalling the definition of 
		$W_s$ given by \eqref{DefW1}.
	\end{proof}

\section{Eigenvalue problems with a different Neumann boundary condition}\label{elOtro}
	In this section we prove Theorem \ref{teo:oautovalointro}. 
	For this purpose, first we present some previous results.

	\begin{teo}\label{teo:space}
	The spaces 
		\[
			\mathcal{W}^{s,p}(U)\coloneqq
			\left\{u\colon\mathbb{R}^n\to\mathbb{R} \text{ measurable }	
			\colon
			\|u\|_{L^p(U)}^p+\mathcal{H}_{s,p}(u)<+\infty \right\}
		\]
		and
		\[
			\mathcal{W}^{s,\infty}(U)\coloneqq
			\left\{u\colon\mathbb{R}^n\to\mathbb{R} \text{ measurable }	
			\colon
			\|u\|_{L^\infty(U)}+\mathcal{H}_{s,\infty}(u)<+\infty \right\}
		\]
 are Banach spaces with the norms 
 $$			\|u\|_{\mathcal{W}^{s,p}(U)}^p\coloneqq
			\|u\|_{L^p(U)}^p+ \mathcal{H}_{s,p}(u)
$$	
and
$$
			\|u\|_{\mathcal{W}^{s,\infty}(U)}\coloneqq
			\|u\|_{L^\infty(U)}+ \mathcal{H}_{s,\infty}(u),
$$ 
respectively.
	\end{teo}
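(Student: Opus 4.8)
The plan is to verify first that the two expressions are genuine norms, and then to establish completeness by the usual ``Cauchy sequence plus Fatou'' scheme; the only step that is not routine is the recovery of the limit function \emph{outside} $U$, since the $L^p(U)$ (respectively $L^\infty(U)$) summand of the norm gives no control there.

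For the norm axioms, non-negativity and homogeneity are immediate. For the triangle inequality when $p<\infty$ I would note that $\|u\|_{\mathcal W^{s,p}(U)}=\|Tu\|_{L^p(Z)}$, where $Z$ is the disjoint union of $U$ (carrying Lebesgue measure) and of $\mathbb{R}^{2n}\setminus(U^c)^2$ (carrying the measure $d(x,y)^{-n-ps}\,dx\,dy$) and $T$ is the linear map sending $u$ to the function equal to $u$ on the first piece and to $(x,y)\mapsto u(x)-u(y)$ on the second; Minkowski's inequality in $L^p(Z)$ then finishes it. For $p=\infty$ the quantity is a sum of two seminorms, hence a seminorm. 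For definiteness, if $\|u\|_{\mathcal W^{s,p}(U)}=0$ then $u=0$ a.e.\ on $U$ and $u(x)=u(y)$ for a.e.\ pair $(x,y)$ with $x\in U$ (the weight $d(x,y)^{-n-ps}$ is positive and finite off the diagonal, so its null sets are exactly the Lebesgue null sets); by Fubini one finds $x_0\in U$ with $u(x_0)=0$ and $u(x_0)=u(y)$ for a.e.\ $y\in\mathbb{R}^n$, whence $u=0$ a.e. The case $p=\infty$ is analogous, reading $\mathcal H_{s,\infty}$ as an essential supremum.

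For completeness, let $\{u_k\}$ be Cauchy in $\mathcal W^{s,p}(U)$, so $\{u_k|_U\}$ is Cauchy in $L^p(U)$ and $\mathcal H_{s,p}(u_k-u_j)\to 0$. Fix once and for all a ball $B=B_r(x_0)$ with $\overline{B_{2r}(x_0)}\subset U$. For $x\in B$ and $y\in U^c$ one has $|x-y|\ge r$, so by the equivalence of $d$ with the Euclidean distance $d(x,y)$ is bounded below by a positive constant, and on $B\times\big(U^c\cap B_M(0)\big)$ it is also bounded above; hence there the singular weight is comparable to a constant, and the Cauchy condition on $\mathcal H_{s,p}$ forces $(x,y)\mapsto u_k(x)-u_k(y)$ to be Cauchy in $L^p\big(B\times(U^c\cap B_M)\big)$. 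Writing $u_k(y)-u_j(y)=\big(u_k(x)-u_j(x)\big)-\big((u_k-u_j)(x)-(u_k-u_j)(y)\big)$ and integrating over $x\in B$, I deduce that $\{u_k\}$ is Cauchy in $L^p(U^c\cap B_M)$ for every $M$. Thus there is a measurable $u\colon\mathbb{R}^n\to\mathbb{R}$ with $u_k\to u$ in $L^p(U)$ and in $L^p(U^c\cap B_M)$ for all $M$; passing to a subsequence (not relabeled), $u_k\to u$ a.e.\ on $\mathbb{R}^n$. Since $\mathcal H_{s,p}(u_k)$ is bounded, Fatou's lemma gives $\mathcal H_{s,p}(u)\le\liminf_k\mathcal H_{s,p}(u_k)<\infty$, so $u\in\mathcal W^{s,p}(U)$; and fixing $k$ large and letting $j\to\infty$ through the subsequence, Fatou applied to the increments $(u_k-u_j)(x)-(u_k-u_j)(y)$ yields $\mathcal H_{s,p}(u_k-u)\le\liminf_j\mathcal H_{s,p}(u_k-u_j)$, which is as small as we like; together with $\|u_k-u\|_{L^p(U)}\to0$ this shows $u_k\to u$ in $\mathcal W^{s,p}(U)$ along the subsequence, and being Cauchy the full sequence converges to $u$ as well. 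For $p=\infty$ the same scheme works with essential suprema in place of $L^p$ norms and locally uniform convergence in place of $L^p_{\mathrm{loc}}$ convergence: fixing a generic $x\in B$, the bound $|u_k(y)-u_j(y)|\le\|u_k-u_j\|_{L^\infty(U)}+\mathcal H_{s,\infty}(u_k-u_j)\,d(x,y)^s$ makes $\{u_k\}$ uniformly Cauchy on each $U^c\cap B_M$.

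The main obstacle is precisely this step: because $\mathcal H_{s,p}$ only sees \emph{differences} of values of $u$, the limit is a priori pinned down only on $U$, and the device that fixes this is to test the increments against a ball compactly contained in $U$, on which the Gagliardo-type kernel degenerates to something comparable to a constant, thereby converting control of the increments into control of $u_k$ itself on arbitrarily large pieces of $U^c$. A minor extra care is needed in the $p=\infty$ case to check that $\mathcal H_{s,\infty}$, read as an essential supremum, interacts well with a.e.\ limits, which it does since only countably many null sets are involved.
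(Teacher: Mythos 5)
Your argument is correct, and it is essentially the same one the paper invokes: the paper does not spell out a proof but simply refers to Proposition~3.1 of \cite{DRV}, whose completeness argument rests on exactly the device you isolate, namely that for $x$ in a fixed ball $B$ compactly contained in $U$ and $y$ in a bounded portion of $U^c$ the kernel $d(x,y)^{-n-ps}$ is comparable to a constant, so the ``crossing'' part of $\mathcal H_{s,p}(u_k-u_j)$ together with $\|u_k-u_j\|_{L^p(U)}$ controls $u_k-u_j$ in $L^p_{\mathrm{loc}}(U^c)$, after which the usual Fatou scheme closes the argument (the $p=\infty$ case being the sup-norm analogue). Your treatment of definiteness and of the extraction of an a.e.\ convergent subsequence, then passing from the subsequence back to the full Cauchy sequence, is also in order.
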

	
	The proof follows exactly as in the proof of 
		\cite[Proposition 3.1]{DRV}. 
	
	\begin{remark}
		It holds that
		$\mathcal{W}^{s,p}(U)\subset W^{s,p}(U).$
	\end{remark}
	\begin{remark} 
		The operator $I\colon\mathcal{W}^{s,p}(U)\to E=
			L^p(U)\times L^p(\mathbb{R}^{2n}
			\setminus (U^c)^2)$ given by
		\begin{align*}
			I(u)&\coloneqq\left(u, 
			\dfrac{u(x)-u(y)}{d(x,y)^{\frac{n}p+s}}\right)
		\end{align*}
		is an isometry. Then $I(\mathcal{W}^{s,p}(U))$ is a closed 
		subspace of 
		$E$ due to the fact that $\mathcal{W}^{s,p}(U)$
		is a Banach space. Hence $I(\mathcal{W}^{s,p}(U))$ is reflexive 
		since $E$ is reflexive. Then, 
		$\mathcal{W}^{s,p}(U)$ is reflexive.
	\end{remark}

	Following the proofs of Lemmas 3.2 and 3.7 in \cite{DRV}, 
	we have the following result.

	\begin{lema}\label{lema:partes} 
	Let $u$ and $v$ be bounded $C^2$ functions in $\mathbb{R}^n.$ 
	Then	 the following formulae hold:
	\begin{myitemize}
		\item Divergence theorem
		\[
			\int_{U}(-\Delta)_p^s u(x)\, dx=
			-\int_{\mathbb{R}\setminus U}\mathcal{N}_{s,p}u(x)\,
			dx.
		\]
		\item  Integration by parts formula
		\[			
			\frac1{2}\mathcal{H}_{s,p}(u,v)=\int_{U}v(x) (-\Delta)_p^s 
			u(x)\, dx
			+\int_{\mathbb{R}\setminus U} v(x)\mathcal{N}_{s,p}u(x)\, dx,
		\]
	\end{myitemize}
	where
		\[
				\mathcal{H}_{s,p}(u,v)\coloneqq \int
				\int_{\mathbb{R}^{2n}\setminus{(U^c)^2}}
				\dfrac{|u(x)-u(y)|^{p-2}(u(x)-u(y))(v(x)-v(y))}{d(x,y)^{n+ps}}
				\, dxdy.
		\]
	\end{lema}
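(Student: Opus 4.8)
The plan is to follow \cite[Lemmas 3.2 and 3.7]{DRV}: first prove the integration by parts formula, then deduce the divergence theorem by specializing to $v\equiv1$. Write
\[
	K(x,y)\coloneqq\frac{|u(x)-u(y)|^{p-2}(u(x)-u(y))}{d(x,y)^{n+ps}},
\]
and note that $K$ is antisymmetric, $K(y,x)=-K(x,y)$, that the domain $\Omega\coloneqq\mathbb{R}^{2n}\setminus(U^c)^2$ is invariant under $(x,y)\mapsto(y,x)$, and that it decomposes as the disjoint union $\Omega=(U\times\mathbb{R}^n)\sqcup(U^c\times U)$.

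First I would record two integrability facts. Since $u$ and $v$ are bounded and $C^2$, hence Lipschitz, near the diagonal $|u(x)-u(y)|^{p-1}|v(x)-v(y)|\lesssim|x-y|^p$ while $d(x,y)^{-(n+ps)}\lesssim|x-y|^{-(n+ps)}$, so the integrand defining $\mathcal{H}_{s,p}(u,v)$ is $\lesssim|x-y|^{p(1-s)-n}$, which is locally integrable because $p(1-s)>0$; together with the decay of the kernel and the boundedness of $u,v$ far from the diagonal, this shows the double integral defining $\mathcal{H}_{s,p}(u,v)$ converges absolutely. Likewise, using $|u(x)-u(y)|^{p-1}\lesssim\min(|x-y|^{p-1},1)$ one checks that $\mathcal{N}_{s,p}u$ is integrable on $\mathbb{R}^n\setminus U$ against the bounded function $v$ (the possible singularity as $x\to\partial U$ is an integrable power of $\mathrm{dist}(x,\partial U)$, and at infinity one has decay $|x|^{-(n+ps)}$); moreover, since $\partial U$ is Lebesgue-null, integrals over $\mathbb{R}^n\setminus U$ and over $\mathbb{R}^n\setminus\overline{U}$ coincide, so the right-hand sides of both claimed formulae are well defined.

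Next I would carry out the symmetrization by truncation. For $\varepsilon>0$ put $\Omega_\varepsilon\coloneqq\Omega\cap\{|x-y|>\varepsilon\}$; on $\Omega_\varepsilon$ the kernel is bounded, the integrand is absolutely integrable, and $\Omega_\varepsilon$ is still symmetric. Splitting $v(x)-v(y)$ and interchanging $x$ and $y$ in the piece containing $v(y)$ — legitimate by Fubini, using the symmetry of $\Omega_\varepsilon$ and the antisymmetry of $K$ — gives
\[
	\iint_{\Omega_\varepsilon}K(x,y)\big(v(x)-v(y)\big)\,dxdy=2\iint_{\Omega_\varepsilon}K(x,y)v(x)\,dxdy.
\]
Decomposing $\Omega_\varepsilon$ along $\Omega=(U\times\mathbb{R}^n)\sqcup(U^c\times U)$ and using Fubini on each piece, the right-hand side equals
\[
	2\int_Uv(x)\!\int_{\{|x-y|>\varepsilon\}}\!\!K(x,y)\,dy\,dx
	+2\int_{\mathbb{R}^n\setminus U}v(x)\!\int_{\{y\in U\,:\,|x-y|>\varepsilon\}}\!\!K(x,y)\,dy\,dx.
\]
Now let $\varepsilon\to0$. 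By absolute convergence on $\Omega$, the left-hand side tends to $\mathcal{H}_{s,p}(u,v)$. In the first term, the inner integral tends to $(-\Delta)_p^su(x)$ by definition of the principal value; the $C^2$ regularity of $u$ — through the cancellation over symmetric annuli of the odd leading part of $u(x)-u(y)=\nabla u(x)\cdot(x-y)+O(|x-y|^2)$ — yields a bound on the inner integral that is uniform in $\varepsilon$ and locally bounded in $x\in U$, so dominated convergence gives the limit $2\int_Uv(x)(-\Delta)_p^su(x)\,dx$. In the second term, for $x\notin\overline{U}$ one has $\mathrm{dist}(x,\overline{U})>0$, so for $\varepsilon$ small the truncation is vacuous and the inner integral already equals $\mathcal{N}_{s,p}u(x)$; hence the inner integrals converge a.e.\ to $\mathcal{N}_{s,p}u$, and by the integrability established above dominated convergence gives $2\int_{\mathbb{R}^n\setminus U}v(x)\mathcal{N}_{s,p}u(x)\,dx$. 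Dividing by $2$ proves the integration by parts formula, and taking $v\equiv1$ — so that $v(x)-v(y)\equiv0$ and the left-hand side vanishes — gives the divergence theorem.

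The main obstacle is the uniform-in-$\varepsilon$ bound on the truncated singular integral $\int_{\{|x-y|>\varepsilon\}}K(x,y)\,dy$ for $x\in U$, i.e.\ quantifying the cancellation produced by the antisymmetry of $K$ and the $C^2$ regularity of $u$ in the presence of the $|\cdot|^{p-2}$ nonlinearity; everything else is routine bookkeeping with Fubini and dominated convergence. This estimate is precisely the one established in \cite[Lemmas 3.2 and 3.7]{DRV}, and it transfers without change to the present setting once one observes that the equivalence of $d$ with the Euclidean distance preserves all the relevant bounds near the diagonal and at infinity.
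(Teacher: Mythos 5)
Your proof is correct and takes the same approach as the paper, which gives no argument of its own but simply refers to Lemmas 3.2 and 3.7 of \cite{DRV}; you unpack exactly that argument: use the antisymmetry of $K$ and the symmetry of $\mathbb{R}^{2n}\setminus(U^c)^2$ to symmetrize the bilinear form via Fubini, split the domain as $(U\times\mathbb{R}^n)\sqcup(U^c\times U)$, and pass to the limit in the $\varepsilon$-truncation, specializing to $v\equiv 1$ at the end. One small caveat worth flagging: for a general distance $d$ equivalent to the Euclidean one the kernel $d(x,y)^{-(n+sp)}$ need not be even in $y-x$, so the principal-value cancellation produced by the odd leading part of $u(x)-u(y)$ is not automatic as it is for $d=|\cdot|$; this is harmless in the regime $p(1-s)>1$ (which holds for the $p\to+\infty$ analysis the paper is after), where the inner integral converges absolutely and the P.V. is vacuous.
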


	This result leads us to the following definition. 
	
	\begin{defi}
		A function $u\in \mathcal{W}^{s,p}(U)$ 
		is a weak solution of \eqref{eq:naut.int} if 
		\begin{equation}\label{eq:waut}
			\frac1{2}\mathcal{H}_{s,p}(u,v)=\lambda
			\int_{U}|u|^{p-2}u v\, dx
		\end{equation}
		for all $v\in \mathcal{W}^{s,p}(U).$
	\end{defi}
	
	In this context we have the following definition.
	
	\begin{defi}
	We say that $\lambda$ is a fractional Neumann $p-$eigenvalue 
	provided there exists a nontrivial weak solution 
	$u\in\mathcal{W}^{s,p}(U)$ of \eqref{eq:naut.int}. The function $u$ 
	is a corresponding eigenfunction.
	\end{defi}
	
	Let us observe the following: if $\lambda>0$ is an eigenvalue
	and $u$ is an eigenfunction associated to $\lambda,$
	then, taking $v\equiv1$ as a test function in 
	\eqref{eq:waut}, we have
	\[
		\int_{U}|u|^{p-2}u\, dx = 0.
	\]

	In fact, we have that $\lambda=0$ is the first eigenvalue of our problem.
	
	\begin{lema}
		It holds that $\lambda=0$ is an eigenvalue of \eqref{eq:naut.int} 
		(with $u=1$ as eigenfunction), and it is isolated and simple. 
	\end{lema}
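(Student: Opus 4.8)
The plan is to verify the three claims separately. First, that $\lambda=0$ is an eigenvalue with eigenfunction $u\equiv 1$: plug $u\equiv 1$ into the weak formulation \eqref{eq:waut}. Since $u(x)-u(y)=0$ for every pair, $\mathcal H_{s,p}(u,v)=0$, while the right-hand side is $0\cdot\int_U v\,dx=0$; so $u\equiv 1$ is a weak solution associated to $\lambda=0$. That it is the \emph{first} (smallest) eigenvalue follows because any eigenvalue $\lambda$ satisfies, by testing with $v=u$, $\tfrac12\mathcal H_{s,p}(u)=\lambda\|u\|_{L^p(U)}^p\geq 0$, and $\|u\|_{L^p(U)}>0$ for a nontrivial eigenfunction (indeed, if $\|u\|_{L^p(U)}=0$ then $u=0$ a.e.\ in $U$, and then the isometry $I$ together with $\mathcal H_{s,p}(u)<\infty$ forces $u\equiv 0$), hence $\lambda\geq 0$.

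For simplicity at $\lambda=0$: suppose $u\in\mathcal W^{s,p}(U)$ is a weak solution with $\lambda=0$. Testing with $v=u$ gives $\mathcal H_{s,p}(u)=0$, i.e.
\[
\iint_{\mathbb R^{2n}\setminus (U^c)^2}\frac{|u(x)-u(y)|^p}{d(x,y)^{n+ps}}\,dxdy=0 ,
\]
so $u(x)=u(y)$ for a.e.\ pair $(x,y)$ outside $(U^c)^2$. In particular $u$ is a.e.\ constant on $U$ (take $x,y\in U$), say $u\equiv c$ on $U$; and for $x\in U$, $y\in U^c$ we also get $u(y)=u(x)=c$ for a.e.\ such $y$, so $u\equiv c$ a.e.\ on $\mathbb R^n$. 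Thus the kernel of the problem at $\lambda=0$ is one-dimensional, which is simplicity.

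For isolation: this is the main obstacle. The plan is to argue that there is no sequence of eigenvalues $\lambda_k>0$ with $\lambda_k\to 0$. Suppose $u_k$ are eigenfunctions for $\lambda_k\to 0^+$, normalized by $\|u_k\|_{L^p(U)}=1$; since $\lambda>0$ forces $\int_U |u_k|^{p-2}u_k\,dx=0$ (take $v\equiv 1$), the $u_k$ lie in the admissible class of the variational characterization, so $\tfrac12\mathcal H_{s,p}(u_k)=\lambda_k\to 0$. Hence $\{u_k\}$ is bounded in $\mathcal W^{s,p}(U)$; by reflexivity (the second Remark) and the compact embedding $\mathcal W^{s,p}(U)\hookrightarrow\hookrightarrow L^p(U)$, pass to a subsequence with $u_k\rightharpoonup u$ in $\mathcal W^{s,p}(U)$ and $u_k\to u$ strongly in $L^p(U)$. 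Then $\|u\|_{L^p(U)}=1$, $\int_U|u|^{p-2}u\,dx=0$, and by weak lower semicontinuity $\mathcal H_{s,p}(u)\leq\liminf\mathcal H_{s,p}(u_k)=0$, so $\mathcal H_{s,p}(u)=0$; as above this forces $u\equiv c$ with $c\ne 0$, contradicting $\int_U|u|^{p-2}u\,dx=0$. Therefore $0$ is isolated. The delicate points to nail down are the compactness of the embedding $\mathcal W^{s,p}(U)\hookrightarrow L^p(U)$ (which should follow from the corresponding fractional Sobolev embedding together with $\mathcal W^{s,p}(U)\subset W^{s,p}(U)$ from the first Remark) and the passage to the limit in the nonlinear constraint $\int_U|u_k|^{p-2}u_k\,dx=0$, which is justified by strong $L^p$ convergence and the continuity of $t\mapsto|t|^{p-2}t$.
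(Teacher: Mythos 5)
Your proposal is correct and follows essentially the same approach as the paper: simplicity comes from testing the weak formulation with $v=u$ to get $\mathcal H_{s,p}(u)=0$ and hence $u$ constant, and isolation comes from a compactness argument showing that a sequence of eigenvalues tending to $0$ would yield a normalized limit eigenfunction that is both constant and of zero $(p-1)$-mean, a contradiction. Your write-up simply fills in the compactness and lower-semicontinuity details that the paper leaves at the level of ``it is not difficult to show.''
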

	\begin{proof}
		Let $u$ be an eigenfunction corresponding to $\lambda=0$ in problem 
		\eqref{eq:naut.int}. From \eqref{eq:waut} taking $v=u$ 
		as a test function we obtain that $u$ is constant in $U$. 
	
		Now, if we have a sequence of eigenvalues $\lambda_k \to 0$ then the 
		corresponding eigenfunctions,  $u_k$, normalized by 
		$\|u_k\|_{L^p (U)}=1$, 
		converge to some $u$. It is not difficult to show that $u$ is an 
		eigenfunction corresponding to $\lambda=0$ (consequently, 
		$u\equiv const$) 
		with $\|u\|_{L^p (U)}=1$ and $\int_U |u|^{p-2}u \, dx=0$, a 
		contradiction that shows that $\lambda=0$ is an isolated eigenvalue. 
	\end{proof}

		Thus, the existence of the first non-zero eigenvalue 
	of \eqref{eq:naut.int} is related to the 
	problem of minimizing the following non-local quotient
	\[
		\frac{\mathcal{H}_{s,p}(v)}{2\|v\|^p_{L^p(U)}}
	\]
	among all functions $v\in \mathcal{W}^{s,p}(U)\setminus\{0\}$ such 
	that
	$\int_{U}|v|^{p-2}v\,dx=0.$

	We are now ready to prove Theorem \ref{teo:oautovalointro}.
	For simplicity, we divide the proof of this theorem into three parts 
	contained in the following lemmas.

	First, by a standard compactness argument and using that 
	$\mathcal{W}^{s,p}(U)\subset W^{s,p}(U),$ we have that
	$\lambda_{s,p}$ is the first non-zero eigenvalue of \eqref{eq:naut.int}.

	\begin{lema}\label{lema:propaut}
     		 It holds that $\lambda_{s,p}$
      		is the first non-zero eigenvalue of \eqref{eq:naut.int}. 
	\end{lema}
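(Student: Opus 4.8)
The plan is to prove the three things packed into the statement: that the infimum defining $\lambda_{s,p}$ is attained and is strictly positive, that a minimizer solves \eqref{eq:naut.int} weakly (so $\lambda_{s,p}$ is a non-zero eigenvalue), and that every other non-zero eigenvalue is at least $\lambda_{s,p}$. For existence I would use the direct method. Take a minimizing sequence $v_k\in\mathcal{W}^{s,p}(U)\setminus\{0\}$ with $\int_U|v_k|^{p-2}v_k\,dx=0$, normalized (by $p$-homogeneity of the quotient) so that $\|v_k\|_{L^p(U)}=1$ and $\mathcal{H}_{s,p}(v_k)\to2\lambda_{s,p}$; then $\{v_k\}$ is bounded in $\mathcal{W}^{s,p}(U)$. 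Since $\mathcal{W}^{s,p}(U)$ is reflexive (Remark above), up to a subsequence $v_k\rightharpoonup v$ weakly there; through the isometry $I$ this is weak convergence of $\big(v_k,(v_k(x)-v_k(y))d(x,y)^{-n/p-s}\big)$ in $L^p(U)\times L^p(\mathbb{R}^{2n}\setminus(U^c)^2)$, so by weak lower semicontinuity of the $L^p$-norm $\mathcal{H}_{s,p}(v)\le\liminf_k\mathcal{H}_{s,p}(v_k)=2\lambda_{s,p}$. Because $U\times U\subset\mathbb{R}^{2n}\setminus(U^c)^2$ we have $\|\cdot\|_{W^{s,p}(U)}\le C\|\cdot\|_{\mathcal{W}^{s,p}(U)}$, and since $U$ is smooth and bounded the embedding $W^{s,p}(U)\hookrightarrow L^p(U)$ is compact; hence $v_k\to v$ strongly in $L^p(U)$, giving $\|v\|_{L^p(U)}=1$ and, because $t\mapsto|t|^{p-2}t$ is continuous and $\{|v_k|^{p-2}v_k\}$ is bounded in $L^{p'}(U)$, also $\int_U|v|^{p-2}v\,dx=0$. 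Thus $v$ is admissible and realizes the infimum.

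The same limiting argument shows $\lambda_{s,p}>0$: if $\lambda_{s,p}=0$, the minimizer $v$ obtained above satisfies $\mathcal{H}_{s,p}(v)=0$, i.e. $v(x)=v(y)$ for a.e. $(x,y)$ with $x\in U$, forcing $v$ to be a constant on $U$, which together with $\int_U|v|^{p-2}v\,dx=0$ yields $v=0$, contradicting $\|v\|_{L^p(U)}=1$. To see that $v$ is a weak solution of \eqref{eq:naut.int} with eigenvalue $\lambda_{s,p}$, I would compute the Euler–Lagrange equation: $\mathcal{H}_{s,p}$ is $C^1$ on $\mathcal{W}^{s,p}(U)$ with $\mathcal{H}_{s,p}'(v)[\varphi]=p\,\mathcal{H}_{s,p}(v,\varphi)$, and the two constraints $v\mapsto\|v\|_{L^p(U)}^p$ and $v\mapsto\int_U|v|^{p-2}v\,dx$ are differentiable on $L^p(U)$, so by the Lagrange multiplier rule there are $\lambda,\mu\in\R$ with
$$
 \tfrac12\mathcal{H}_{s,p}(v,\varphi)=\lambda\int_U|v|^{p-2}v\varphi\,dx+\mu\int_U|v|^{p-2}\varphi\,dx\qquad\text{for all }\varphi\in\mathcal{W}^{s,p}(U);
$$
equivalently one perturbs $v$ by $t\varphi-c(t)$, where the constant $c(t)$ is chosen (via the implicit function theorem and the strict monotonicity of $c\mapsto\int_U|w-c|^{p-2}(w-c)\,dx$) to preserve the zero-average constraint, and differentiates at $t=0$. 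Since constants lie in $\mathcal{W}^{s,p}(U)$ and $\mathcal{H}_{s,p}(v,1)=0$ (as $1(x)-1(y)=0$), testing with $\varphi\equiv1$ forces $\mu\int_U|v|^{p-2}\,dx=0$, hence $\mu=0$ because $\int_U|v|^{p-2}\,dx>0$; then testing with $\varphi=v$ gives $\lambda=\tfrac12\mathcal{H}_{s,p}(v)/\|v\|_{L^p(U)}^p=\lambda_{s,p}$. So $v$ is a nontrivial weak solution and $\lambda_{s,p}>0$ is an eigenvalue.

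Finally, to show it is the \emph{first} non-zero eigenvalue, let $\lambda>0$ be any eigenvalue with eigenfunction $u\in\mathcal{W}^{s,p}(U)\setminus\{0\}$. Testing \eqref{eq:waut} with $v\equiv1$ and using $\mathcal{H}_{s,p}(u,1)=0$ gives $\int_U|u|^{p-2}u\,dx=0$, so $u$ is admissible for the variational problem defining $\lambda_{s,p}$; testing \eqref{eq:waut} with $v=u$ gives $\tfrac12\mathcal{H}_{s,p}(u)=\lambda\|u\|_{L^p(U)}^p$, i.e. $\mathcal{H}_{s,p}(u)/(2\|u\|_{L^p(U)}^p)=\lambda$, whence $\lambda\ge\lambda_{s,p}$ by definition of the infimum. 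Therefore $\lambda_{s,p}$ is the smallest non-zero eigenvalue of \eqref{eq:naut.int}. The main obstacle is the first step: obtaining strong $L^p(U)$ compactness of the minimizing sequence — which is exactly where the inclusion $\mathcal{W}^{s,p}(U)\subset W^{s,p}(U)$ and the compact fractional Sobolev embedding are used — and passing to the limit in the nonlinear average-zero constraint; a secondary point is verifying, in the Euler–Lagrange computation, that the multiplier attached to that constraint vanishes (the argument via $\varphi\equiv1$), with the case $1<p<2$ requiring the more careful perturbation version.
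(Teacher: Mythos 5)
Your proposal is correct and takes the same route the paper intends: the paper offers no proof of this lemma beyond the preceding one-line remark that it follows ``by a standard compactness argument and using that $\mathcal{W}^{s,p}(U)\subset W^{s,p}(U)$,'' which is precisely the compact embedding $W^{s,p}(U)\hookrightarrow\hookrightarrow L^p(U)$ you invoke to run the direct method, followed by the Euler--Lagrange computation and the minimality check via testing with $\varphi\equiv 1$ and $\varphi=u$. A minor cosmetic point: the Gateaux derivative of $v\mapsto\int_U|v|^{p-2}v\,dx$ carries a factor $(p-1)$, so your multiplier identity should read $\tfrac12\mathcal{H}_{s,p}(v,\varphi)=\lambda\int_U|v|^{p-2}v\varphi\,dx+\mu(p-1)\int_U|v|^{p-2}\varphi\,dx$, but this is absorbed into $\mu$ and does not change the conclusion $\mu=0$; you correctly flag that the perturbation $v+t\varphi-c(t)$ is the safer route when $1<p<2$.
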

	
	\begin{remark}\label{remark:estima} Since 
		$\mathcal{W}^{s,p}(U)\subset W^{s,p}(U)$ 
		and
		\[
			\llbracket u \rrbracket_{s,p}^p\le \mathcal{H}_{s,p}(u),
			\qquad 	
			\forall u\in \mathcal{W}^{s,p}(U),
		\]
		we have that
		\[
			\lambda_{s,p}^N\le 2\lambda_{s,p}.
		\]
	\end{remark}

	Our next result shows the asymptotic behavior of 
	$(\lambda_{s,p})^{\nicefrac{1}{p}}.$

	\begin{lema}\label{lema:anv}
		We have
		\[
			\lim_{p\to+ \infty}(\lambda_{s,p})^{\nicefrac1p}=
			\dfrac{2}{(\diam_d(U))^s}=\Lambda_{s,\infty}\coloneqq
			\inf\left\{\dfrac{\mathcal{H}_{s,\infty}(u)}
			{\|u\|_{L^{\infty}(U)}}\colon u\in\mathcal{A}\right\},
		\]
		where 
		\[
			\mathcal{A}\coloneqq\left\{v\in\mathcal{W}^{s,\infty}(U)
			\setminus\{0\}
			\colon \displaystyle\sup_{x\in U} u(x) +\inf_{x\in U} u(x)=0
			\right\}.
		\]
	\end{lema}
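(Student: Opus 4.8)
The plan is to prove the two equalities separately: first the variational identification
$\Lambda_{s,\infty} = 2/(\diam_d(U))^s$, and then the asymptotic statement
$\lim_{p\to+\infty}(\lambda_{s,p})^{1/p} = \Lambda_{s,\infty}$, using Remark~\ref{remark:estima}
together with the known limit \eqref{eq:1ercaso} for one of the two inequalities.

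\textbf{Step 1: the value of $\Lambda_{s,\infty}$.}
First I would show $\Lambda_{s,\infty}\le 2/(\diam_d(U))^s$ by exhibiting a test function.
Pick $x_0,y_0\in\overline U$ realizing the diameter, $d(x_0,y_0)=\diam_d(U)$, and set
$u(x)=d(x,x_0)^s-\tfrac12\diam_d(U)^s$; this is (a translate of) an $s$-Hölder function with the
right balance condition $\sup_U u+\inf_U u=0$ (since $\sup_U d(\cdot,x_0)^s$ is essentially
$\diam_d(U)^s$ and the inf is $0$, after adjusting on a null set / using $\overline U$), so
$u\in\mathcal A$, while $\mathcal H_{s,\infty}(u)\le 1$ by the triangle-type inequality
$d(x,y)^s\le d(x,x_0)^s + \dots$ used already in Lemma~\ref{concave}, and $\|u\|_{L^\infty(U)}
=\tfrac12\diam_d(U)^s$; dividing gives the bound. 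For the reverse inequality $\Lambda_{s,\infty}\ge
2/(\diam_d(U))^s$, take any $u\in\mathcal A$ normalized so that $\mathcal H_{s,\infty}(u)=:M$, i.e.
$|u(x)-u(y)|\le M\,d(x,y)^s$ for all $(x,y)\notin (U^c)^2$; choose $x_n,y_n\in U$ with
$u(x_n)\to\sup_U u$ and $u(y_n)\to\inf_U u$, so that $\|u\|_{L^\infty(U)}=\sup_U u=\tfrac12(\sup_U
u-\inf_U u)=\tfrac12\lim(u(x_n)-u(y_n))\le \tfrac12 M\,\limsup d(x_n,y_n)^s\le \tfrac12 M\,
\diam_d(U)^s$, whence $M/\|u\|_{L^\infty(U)}\ge 2/\diam_d(U)^s$. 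Taking the infimum over $u$ gives
the claim.

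\textbf{Step 2: the upper bound $\limsup_{p\to\infty}(\lambda_{s,p})^{1/p}\le \Lambda_{s,\infty}$.}
Here I would plug a near-optimal $\infty$-test function into the definition of $\lambda_{s,p}$.
Using the function $u$ from Step~1 (or a mollification so it lies in $\mathcal W^{s,p}(U)$ for all
$p$, with a correction $\bar v_p\in\R$ enforcing $\int_U|u-\bar v_p|^{p-2}(u-\bar v_p)\,dx=0$, noting
$\bar v_p\to 0$), one estimates
$\mathcal H_{s,p}(u) = \iint_{\R^{2n}\setminus(U^c)^2} \frac{|u(x)-u(y)|^p}{d(x,y)^{n+sp}}\,dxdy
\le M^p \iint_{\R^{2n}\setminus(U^c)^2} \frac{d(x,y)^{sp}}{d(x,y)^{n+sp}}\,dxdy$,
but this last integral diverges, so instead one splits: on the region where $d(x,y)$ is small the
integrand is controlled by the Hölder bound and on the complement by boundedness of $u$ and
equivalence of $d$ with the Euclidean distance, getting $\mathcal H_{s,p}(u)\le C(1+M^p)$ with $C=C(U,s,n)$
independent of $p$; then $(\lambda_{s,p})^{1/p}\le \big(\tfrac{\mathcal H_{s,p}(u)}{2\|u-\bar
v_p\|_{L^p(U)}^p}\big)^{1/p}\to M/\|u\|_{L^\infty(U)}$ as $p\to\infty$ (the $C$ and the $2$ and the
$L^p\to L^\infty$ all contribute factors tending to $1$). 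Since $u$ was arbitrary near-optimal in
$\mathcal A$, the $\limsup$ is $\le \Lambda_{s,\infty}$.

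\textbf{Step 3: the lower bound $\liminf_{p\to\infty}(\lambda_{s,p})^{1/p}\ge \Lambda_{s,\infty}$.}
This is the place where I expect the real work, but here it is cheap: by Remark~\ref{remark:estima}
we have $\lambda_{s,p}^N\le 2\lambda_{s,p}$, hence
$(\lambda_{s,p})^{1/p}\ge \tfrac12^{1/p}(\lambda_{s,p}^N)^{1/p}\to \Lambda^N_{s,\infty}
=2/(\diam_d(U))^s = \Lambda_{s,\infty}$ by \eqref{eq:1ercaso} and Step~1. Combining Steps~2 and~3
gives $\lim_{p\to\infty}(\lambda_{s,p})^{1/p}=\Lambda_{s,\infty}=2/(\diam_d(U))^s$, which is the
assertion. \emph{The main obstacle} is really contained in Step~2: making the test function
admissible for every $p$ (the balancing constant $\bar v_p$, the fact that $u$ must lie in
$\mathcal W^{s,p}(U)$, i.e. $\mathcal H_{s,p}(u)<\infty$, which forces the small-$d$ splitting of the
double integral) and checking that all the $p$-dependent constants are harmless after taking $p$-th
roots; the lower bound, by contrast, is immediate from the already-established Neumann result and the
comparison in Remark~\ref{remark:estima}.
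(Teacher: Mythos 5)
Your Steps~1 and~3 match the paper's Steps~1--3 essentially verbatim (the explicit $s$-H\"older test function for $\Lambda_{s,\infty}\le 2/(\diam_d(U))^s$, the elementary upper bound via $\sup-\inf$ for the reverse inequality, and the $\liminf$ bound obtained for free from Remark~\ref{remark:estima} together with \eqref{eq:1ercaso}). The gap is in your Step~2. You propose to bound $\mathcal{H}_{s,p}(u)$ for a general (near-optimal) $u\in\mathcal A$ by splitting the double integral and using the H\"older bound $|u(x)-u(y)|\le M\,d(x,y)^s$ on the region where $d(x,y)$ is small. That split is backwards: on the near-diagonal region the H\"older bound gives an integrand of size $M^p d(x,y)^{-n}$, which is \emph{not} integrable near $d(x,y)=0$; the H\"older bound is what makes the large-$d(x,y)$ tail converge, while boundedness of $u$ gives $d(x,y)^{-n-sp}$, again divergent near the diagonal. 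So the estimate $\mathcal{H}_{s,p}(u)\le C(1+M^p)$ with $C$ independent of $p$ does not follow, and indeed a generic $u\in\mathcal{W}^{s,\infty}(U)$ need not lie in $\mathcal{W}^{s,p}(U)$ for any finite $p$, so ``arbitrary near-optimal $u\in\mathcal A$'' is not an admissible test class.

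The paper avoids this entirely by choosing a \emph{Lipschitz} test function, $u(x)=d(x,y_0)$ truncated to a $\delta$-neighborhood of $U$ with $\delta=\diam_d(U)$. Since $|u(x)-u(y)|\le d(x,y)$, the integrand is bounded by $d(x,y)^{p(1-s)-n}$, which \emph{is} integrable near the diagonal once $p(1-s)>0$, yielding $\mathcal{H}_{s,p}(u)\le C\,(\diam_d(U))^{p(1-s)}$ with $C$ independent of $p$. Combined with the normalization $w_p=u-c_p$ (so that $\int_U |w_p|^{p-2}w_p=0$) and the estimate $\liminf_p\|w_p\|_{L^p(U)}\ge\diam_d(U)/2$ from \cite{EKNT}, taking $p$-th roots gives exactly $(\diam_d(U))^{1-s}\big/(\diam_d(U)/2)=2/(\diam_d(U))^s$. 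In other words, the paper does \emph{not} try to make a near-optimal $s$-H\"older function admissible for $\mathcal{H}_{s,p}$; it picks a concrete Lipschitz competitor whose $p$-th root behaviour can be computed directly. Your parenthetical ``(or a mollification so it lies in $\mathcal{W}^{s,p}(U)$\dots)'' is the right instinct, but you would then have to track how the mollification parameter interacts with both $\mathcal{H}_{s,\infty}$, $\|u\|_{L^\infty}$, and $p$ simultaneously, which is considerably more delicate than the paper's one-line choice. (Incidentally, the specific test function $d(\cdot,x_0)^s$ of Step~1 does happen to have finite $\mathcal{H}_{s,p}$ because it is Lipschitz away from $x_0$, but proving the right $p$-dependence requires precisely the ``Lipschitz near the diagonal, H\"older far away'' split, the opposite of what you wrote.)
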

	
	\begin{proof}
		For the reader's convenience, we split the proof in four steps.
		
		\noindent {\it Step 1.} We start showing that 
		\[
			\Lambda_{s,\infty}\le\dfrac{2}{(\diam_d(U))^s}.
		\]
		Let $x_0,y_0\in\overline{U}$ such that $d(x_0,y_0)=\diam_d(U).$ 
		Let $u\colon\mathbb{R}^n\to\mathbb{R}$ be given by
		\[
			u(x)\coloneqq -1 + \frac{2}{\diam_d(U)} d(x,y_0)^s.
		\]
		Observe that, $$\displaystyle\sup_{x\in U} u(x) 
		=-\inf_{x\in U} u(x)=1$$ and
		\[
			\dfrac{|u(x)-u(y)|}{d(x,y)^s}=
			\dfrac{2}{(\diam_d(U))^s}
			\dfrac{|d(x,y_0)^s-d(y,y_0)^s|}{d(x,y)^s}
			\le\dfrac{2}{(\diam_d(U))^s}
		\]
		for all $x,y\in\mathbb{R}^n.$
		Then $u\in\mathcal{A},$ $\|u\|_{L^\infty(U)}=1$ and
		\[
			\mathcal{H}_{s,\infty}(u)\le \dfrac{2}{(\diam_d(U))^s}.
		\]
		Therefore
		\[
			\Lambda_{s,\infty}\le\mathcal{H}_{s,\infty}(u)
			\le \dfrac{2}{(\diam_d(U))^s}.
		\]
		\noindent {\it Step 2.} We now prove that 
		\[
			\Lambda_{s,\infty}\ge\dfrac{2}{(\diam_d(U))^s}.
		\]
		If $u\in\mathcal{A}$ then
		\begin{align*}
			2\|u\|_{L^{\infty}(U)}&=\sup_{x\in U} u(x)
										-\inf_{x\in U} u(x)\\
									&=\sup\left\{|u(x)-u(y)|\colon
									x,y\in U \right\}\\
									&\le (\diam_d(U))^s
									\sup\left\{
									\dfrac{|u(x)-u(y)|}{d(x,y)^s}\colon
									x,y\in U \right\}\\
									&\leq (\diam_d(U))^s
									\mathcal{H}_{s,\infty}(u).
		\end{align*}
		Thus
		\[
			\dfrac{2}{(\diam_d(U))^s}\le
			\dfrac{\mathcal{H}_{s,\infty}(u)}{\|u\|_{L^{\infty}(U)}}
		\]
		for any $u\in\mathcal{A},$ that is,
		\[
			\Lambda_{s,\infty}\ge\dfrac{2}{(\diam_d(U))^s}.
		\]
		
		\noindent {\it Step 3.} We show that
		\[
			\dfrac{2}{(\diam_d(U))^{s}}\le
			\liminf_{p\to+\infty}( \lambda_{s,p})^{\nicefrac{1}{p}}.
		\]
		By \eqref{eq:1ercaso} and Remark \ref{remark:estima}, we have 
		that
		\[ 
		\begin{array}{l}
		\displaystyle 
			\dfrac{2}{(\diam_d(U)^{s})}
			\le\lim_{p\to+\infty} \left(\lambda_{s,p}^N\right)
			^{\nicefrac{1}{p}}
			 \\[10pt]
			 \qquad \qquad \qquad \displaystyle \le
			\liminf_{p\to+\infty} 2^{\nicefrac1p}
			(\lambda_{s,p}(U))^{\nicefrac{1}{p}}
			=\liminf_{p\to+\infty} 
			(\lambda_{s,p}(U))^{\nicefrac{1}{p}}.
			\end{array}
		\]     	
		
		\noindent {\it Step 4.} Finally, we prove that
		\[
			\limsup_{p\to+\infty} (\lambda_{s,p})^{\nicefrac{1}{p}}
			\le\dfrac{2}{(\diam_d(U))^{s}}.
		\]
		As in Step 1, let $x_0,y_0\in \overline{U}$ be such that
		$d(x_0,y_0)=\diam_d(U).$ Set $\delta=\diam_d(U),$
		\[
			U_{\delta}\coloneqq\left\{x\in\mathbb{R}^n
			\colon \inf_{y\in U} d(x,y)\le \delta\right\}
		\]
		and
		\[
			u(x)\coloneqq
			\begin{cases}
					d(x,y_0) &\text{ if } x\in U_{\delta},\\
					0 &\text{ if } x\in \mathbb{R}^n\setminus
					U_{\delta}.
			\end{cases}
		\]
		
		Let $\varepsilon>0.$ Then
		\begin{align*}
			\mathcal{H}_{s,p}(u)\!\le&2
			\int_{U\times U_{\delta}}\!\!\!\!\!\!
			\dfrac{|d(x,y_0)-d(y,y_0)|^p}{d(x,y)^{n+sp}}
			\, dxdy \\
			& \qquad + 
			2\int_{U\times (\mathbb{R}^n\setminus U_{\delta})}
			\dfrac{d(x,y_0)^p}{d(x,y)^{n+sp}}
			\, dxdy\\
			\le& 2
			\int_{U\times U_{\delta}}\!\!\!\!\!\!
			\dfrac{d(x,y)^{p(1-s)-\varepsilon}}{d(x,y)^{n-\varepsilon}}
			\, dxdy 
			\\
			& \qquad + 
			2\int_{U\times (\mathbb{R}^n\setminus U_{\delta})}
			\dfrac{d(x,y_0)^p}{d(x,y)^{n+\varepsilon+sp-\varepsilon}}
			\, dxdy.
		\end{align*}
		Thus, since $d$ is a distance equivalent to the Euclidean one,
		if $$p>\max\left\{
		\frac{\varepsilon}{(1-s)},
		\frac{\varepsilon}{s}\right\}$$ we get that
		$u\in\mathcal{W}^{s,p}(U)$ and
		\begin{equation}\label{eq:aux1}
			\mathcal{H}_{s,p}(u)
			\le C(\diam_d(U))^{p(1-s)}
			\left\{(\diam_d(U))^{-\varepsilon}+ 
			(\diam_d(U))^{\varepsilon}\right\},
		\end{equation}
		where $C$ is a constant independent of $p.$ 
				
		We now choose $c_p\in\mathbb{R}$ such that
		\[
			w_p(x)=u(x)-c_p
		\]
		satisfies 
		\[
			\int_U|w_p|^{p-2}w_p\, dx=0.
		\]
		Hence, if $p>\max\left\{
		\nicefrac{\varepsilon}{(1-s)},
		\nicefrac{\varepsilon}{s}\right\},$ by \eqref{eq:aux1},
		we have that
		\begin{align*}
			\lambda_{s,p}&
			\le\dfrac{\mathcal{H}(w_p)}{2\|w_p\|_{L^p(U)}^p}\\
			&= \dfrac{\mathcal{H}(u)}{2\|w_p\|_{L^p(U)}^p}\\
			&\le \dfrac{C}{2\|w_p\|_{L^p(U)}^p}
			(\diam_d(U))^{p(1-s)}
			\left\{(\diam_d(U))^{-\varepsilon}+ 
			(\diam_d(U))^{\varepsilon}\right\},
		\end{align*}
		therefore
		\begin{equation}\label{eq:aux2}
			\limsup_{p\to+\infty}(\lambda_{s,p})^{\nicefrac1p}\le
			\dfrac{(\diam_d(U))^{1-s}}
			{\displaystyle\liminf_{p\to+\infty}\|w_p\|_{L^p(U)}}.
		\end{equation}
		
		On the other hand, in \cite{EKNT} it is proved that
		\begin{equation}\label{eq:aux3}
			\liminf_{p\to+\infty}\|w_p\|_{L^p(U)}\ge \dfrac{2}{\diam_d(U)}.
		\end{equation}
		
		Thus, by \eqref{eq:aux2} and \eqref{eq:aux3}, we get
		\[
			\limsup_{p\to+\infty} (\lambda_{s,p})^{\nicefrac{1}{p}}
			\le\dfrac{2}{(\diam_d(U))^{s}}.
		\]
		This concludes the proof.
	\end{proof}
	
	\begin{remark}\label{remark:compara}
	By  \eqref{eq:1ercaso} and Lemma \ref{lema:anv}, we have that
	\[
		\Lambda_{s,\infty}^N
		=\lim_{p\to+\infty}\left(\lambda_{s,p}^N\right)^{\nicefrac1p}
		=\frac{2}{(\diam_d(U))^s}
		=\lim_{p\to+\infty}(\lambda_{s,p})^{\nicefrac1p}
		=\Lambda_{s,\infty}.
	\]
	\end{remark}
	
	Concerning  the convergence as $p\to+\infty$ of the eigenfunctions we have the following result.
	
	\begin{lema}\label{lema:conver}
		If $u_p$ is a minimizer of $\lambda_{s,p},$ normalized with $\|u_p \|_{L^p (U)}=1$,
		then, up to a subsequence, $u_p$ converges in $C(\overline{U})$
		to some minimizer $u_{\infty}\in W^{s,\infty}(U)$ of 
		$\Lambda_{s,\infty}^N.$
	\end{lema}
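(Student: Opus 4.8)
The plan is to argue by compactness, as in the classical $p\to+\infty$ analysis of eigenvalue problems (cf. \cite{CDJ,EKNT}). First I would establish a modulus of continuity for the $u_p$ on $\overline U$ that is uniform in $p$, so that Arzel\`a--Ascoli yields a subsequence with $u_p\to u_\infty$ uniformly on $\overline U$. Then I would check that $u_\infty$ (extended suitably to $\R^n$) is admissible for the limit problem, i.e. $u_\infty\in\mathcal A$ with $\|u_\infty\|_{L^\infty(U)}=1$. Finally I would use lower semicontinuity of the energy together with Lemma~\ref{lema:anv} to conclude that $u_\infty$ realizes the infimum defining $\Lambda_{s,\infty}=\Lambda^N_{s,\infty}$ (see Remark~\ref{remark:compara}).

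\emph{Uniform H\"older bound.} Since $u_p$ minimizes $\lambda_{s,p}$ with $\|u_p\|_{L^p(U)}=1$, we have $\mathcal{H}_{s,p}(u_p)=2\lambda_{s,p}$, so $\mathcal{H}_{s,p}(u_p)^{1/p}=2^{1/p}(\lambda_{s,p})^{1/p}\to\Lambda_{s,\infty}$ by Lemma~\ref{lema:anv}; in particular $\mathcal{H}_{s,p}(u_p)\le M^p$ for some $M$ independent of $p$ large. Fix $t\in(0,s)$ and an integer $q>n/t$, and let $p>q$. For $x,y\in U$,
\[
\frac{|u_p(x)-u_p(y)|^q}{d(x,y)^{n+tq}}=\left(\frac{|u_p(x)-u_p(y)|^p}{d(x,y)^{n+sp}}\right)^{q/p}d(x,y)^{\gamma_p},\qquad \gamma_p=-n\Big(1-\tfrac qp\Big)+(s-t)q,
\]
so H\"older's inequality with exponents $p/q$ and $p/(p-q)$, integrating over $U\times U\subset\R^{2n}\setminus(U^c)^2$, gives
\[
\iint_{U\times U}\frac{|u_p(x)-u_p(y)|^q}{d(x,y)^{n+tq}}\,dx\,dy\le \mathcal{H}_{s,p}(u_p)^{q/p}\left(\iint_{U\times U}d(x,y)^{\gamma_p p/(p-q)}\,dx\,dy\right)^{(p-q)/p}.
\]
The exponent $\gamma_p p/(p-q)$ tends to $-n+(s-t)q>-n$, so (using that $d$ is equivalent to the Euclidean distance and $U$ is bounded) the last integral stays bounded uniformly in $p$ large. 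Together with $\|u_p\|_{L^q(U)}\le|U|^{1/q-1/p}$, this bounds $\|u_p\|_{W^{t,q}(U)}$ independently of $p$. Since $tq>n$ and $U$ is smooth and bounded, the embedding $W^{t,q}(U)\hookrightarrow C^{0,t-n/q}(\overline U)$ is compact, whence a subsequence of $u_p$ converges in $C(\overline U)$ to some $u_\infty$.

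\emph{Admissibility and lower semicontinuity.} Uniform convergence on the bounded set $U$ together with $\|u_p\|_{L^q(U)}\le|U|^{1/q-1/p}$ forces $\|u_\infty\|_{L^\infty(U)}\le1$ (let $p\to\infty$, then $q\to\infty$), and $\|u_\infty\|_{L^\infty(U)}<1$ is incompatible with $\|u_p\|_{L^p(U)}=1$; hence $\|u_\infty\|_{L^\infty(U)}=1$. The constraint $\int_U|u_p|^{p-2}u_p\,dx=0$ reads $\|u_p^+\|_{L^{p-1}(U)}=\|u_p^-\|_{L^{p-1}(U)}$; passing to the limit gives $\|u_\infty^+\|_{L^\infty(U)}=\|u_\infty^-\|_{L^\infty(U)}$, i.e. $\sup_Uu_\infty+\inf_Uu_\infty=0$ (as in \cite{EKNT}), where along the way one sees $u_\infty$ takes both signs. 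For lower semicontinuity, fix $x_0,y_0\in U$ with $x_0\ne y_0$ and balls $B_\rho(x_0),B_\rho(y_0)\subset U$; bounding $\mathcal{H}_{s,p}(u_p)$ below by the integral over $B_\rho(x_0)\times B_\rho(y_0)$, taking $p$-th roots, using uniform convergence and then letting $\rho\to0$, we get $|u_\infty(x_0)-u_\infty(y_0)|/d(x_0,y_0)^s\le\liminf_p\mathcal{H}_{s,p}(u_p)^{1/p}=\Lambda_{s,\infty}$. Taking the supremum over such pairs and using continuity of $u_\infty$, the $s$-H\"older seminorm of $u_\infty$ over $\overline U$ is $\le\Lambda_{s,\infty}$; extending $u_\infty$ to $\R^n$ with the same seminorm (a McShane extension for the metric $d^s$, which does not increase $\mathcal H_{s,\infty}$) we obtain $u_\infty\in\mathcal{W}^{s,\infty}(U)\subset W^{s,\infty}(U)$, $u_\infty\in\mathcal A$, and $\mathcal{H}_{s,\infty}(u_\infty)\le\Lambda_{s,\infty}$. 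Since $\|u_\infty\|_{L^\infty(U)}=1$, this reads $\mathcal{H}_{s,\infty}(u_\infty)/\|u_\infty\|_{L^\infty(U)}\le\Lambda_{s,\infty}$, while the reverse inequality is the definition of $\Lambda_{s,\infty}$; hence $u_\infty$ is a minimizer.

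The main obstacle is the uniform H\"older bound, i.e. the equicontinuity of $\{u_p\}$ needed for Arzel\`a--Ascoli. Everything reduces to the interpolation inequality above, and the point that needs care is the choice of the auxiliary exponent $t<s$: this is exactly what makes $d(x,y)^{\gamma_p p/(p-q)}$ integrable near the diagonal of $U\times U$ (at the endpoint $t=s$ the exponent equals $-n$, which fails). Once the uniform $C^{0,\alpha}$ bound is in hand, the remaining passages to the limit are routine and parallel those in \cite{CDJ,EKNT}.
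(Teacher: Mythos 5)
Your proposal is correct and follows the same overall structure as the paper's proof---uniform compactness via a H\"older/interpolation bound in a fractional Sobolev space, then passage to the limit---but diverges in how the final $s$-H\"older estimate on $u_\infty$ is obtained. For compactness, the paper fixes $q$ with $sq>2n$ and works in $W^{r,q}(U)$ with $r=s-n/q$; your choice of an auxiliary exponent $t\in(0,s)$ with $tq>n$ is the same mechanism (the paper's $r$ is one admissible instance of your $t$), and your H\"older computation with the exponent $\gamma_p$ checks out. Where you genuinely diverge: to prove $\llbracket u_\infty\rrbracket_{s,\infty}\le\Lambda_{s,\infty}^N$, the paper bounds $\llbracket u_{p_j}\rrbracket_{r,q}$ uniformly, uses weak lower semicontinuity in $W^{r,q}$ to get $\llbracket u_\infty\rrbracket_{r,q}\le|U|^{2/q}\Lambda_{s,\infty}^N$, and then lets $q\to\infty$; you instead restrict the $\mathcal{H}_{s,p}$ energy integral to a product of small balls around a fixed pair of points, take $p$-th roots, and shrink the balls---a pointwise ``local test'' that avoids the $q\to\infty$ passage and the weak-LSC step, yielding the $s$-H\"older bound directly. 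This is a slightly more elementary route and sidesteps the convergence of $\llbracket u_\infty\rrbracket_{r(q),q}$ to $\llbracket u_\infty\rrbracket_{s,\infty}$ that the paper uses implicitly. Your final McShane-type extension of $u_\infty$ to $\R^n$ is an extra tidying step the paper leaves implicit: the paper states its conclusion with the seminorm $\llbracket\cdot\rrbracket_{s,\infty}$ over $U$, so the extension matters only if one insists on $u_\infty\in\mathcal{W}^{s,\infty}(U)$ and admissibility in $\mathcal{A}$ with the full $\mathcal{H}_{s,\infty}$ energy, which you correctly point out. The remaining steps---$\|u_\infty\|_{L^\infty(U)}=1$ and $\sup_U u_\infty+\inf_U u_\infty=0$---match the paper's, with the latter deferred to \cite{EKNT} in both.
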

	\begin{proof}
		For any $p\in(1,\infty),$ we consider
		$u_p\in\mathcal{W}^{s,p}(U)$ such that
		$$
			\|u_p\|_{L^p(U)}=1, \qquad \int_U |u_p|^{p-2}u_p\, dx=0 
			\qquad \mbox{and} \qquad
			\frac12\mathcal{H}_{s,p}(u_p)=\lambda_{s,p}.	
		$$
		Then, by Lemma \ref{lema:anv}, 
		there exists a constant $C$ independent of $p$ such that
		\begin{equation}
			\label{eq:caux3}
				\left(\frac{\mathcal{H}_{s,p}(u_p)}2
				\right)^{\frac1p}\le C
		\end{equation}
		for all $p\in(1,\infty).$
		
		Let us fix $q\in(1,\infty)$ such that $sq>2n.$ 
		If $p>q$ then, by H\"older's inequality, we have that
		\begin{equation}
			\label{eq:caux4}
			\|u_p\|_{L^q(\Omega)}\le |U|^{\frac1{q}-\frac1{p}}
			\|u_p\|_{L^{p}(\Omega)}\le |U|^{\frac1{q}-\frac1{p}}
			\quad\forall p\ge q, 
		\end{equation}
		and taking $r=s-\nicefrac{n}{q}\in(0,1),$ 
		again by H\"older's inequality, we get
		\begin{equation}
			\label{eq:caux5}
			\begin{aligned}
				\llbracket u_p\rrbracket_{r,q}^q
				&= \int_U\int_U\dfrac{|u_p(x)-u_p(y)|^q}{d(x,y)^{sq}}\, 
					dxdy\\
				&\le |U|^{2(1-\frac{q}{p})}\left(
				\int_U\int_U\dfrac{|u_p(x)-u_p(y)|^{p}}
				{d(x,y)^{sp}}\,
			 		dxdy\right)^{\frac{q}{p}}\\
				&\le 2^{\frac{q}p}(\diam_d(U))^{\frac{nq}{p}} 
				|U|^{2(1-\frac{q}{p})}
				\left(\frac{\mathcal{H}_{s,p}(u_p)}2
				\right)^{\frac qp}.
			\end{aligned}
		\end{equation}
		Then, by \eqref{eq:caux3}, we get
		\[
		\llbracket u_p\rrbracket_{r,q}
		\le 2^{\frac1p}(\diam_d(U))^{\frac{n}{p}} 
		|U|^{2\left(\frac1q-\frac1{p}\right)}C^q \qquad\forall p\ge q, 
	\]
	where $C$ is a constant independent of $p.$
	Hence $\{u_p\}_{p\ge q}$ is a bounded sequence in $W^{r,q}(U).$
	Then, since $rq=sq-n>n,$ by fractional compact embedding theorems 
	(see \cite[Theorem 4.54]{FD}), 
	there exist
	a function $u_{\infty}\in C(\overline{U})$ and
	a subsequence $\{u_{p_j}\}_{j\in\mathbb{N}}$ of $\{u_{p}\}_{p\ge q},$ 	
	such that
	\begin{align*}
		u_{p_j}\to u_{\infty} &\quad\mbox{uniformly in } 
		\overline{U},\\
		u_{p_j}\rightharpoonup u_{\infty} &\quad\mbox{weakly in }
		 W^{r,q}(U).
	\end{align*}
	Hence, by \eqref{eq:caux4}, 
	$\|u_\infty\|_{L^q(\Omega)}\le |U|^{\frac1q},$ and
	by \eqref{eq:caux5} and Remark \ref{remark:compara}, we get
	\begin{align*}
		\llbracket u_\infty\rrbracket_{r,q}&\le
		\liminf_{j\to \infty}\llbracket u_{p_j}\rrbracket_{r,q}\\
		&\le\liminf_{j\to \infty} 2^{\nicefrac{1}{p_j}}
			(\diam_d(U))^{\nicefrac{n}{p_j}} 
				|U|^{2\left(1-\nicefrac{1}{p_j}\right)}
				\left(\frac{\mathcal{H}_{s,p_j}(u_{p_j})}2
				\right)^{\nicefrac1{p_j}}\\
		&=|U|^{\frac2{q}}\Lambda_{s,\infty}^N.
	\end{align*}    
	    
	Letting $q\to+\infty,$ we obtain 
	$$\|u_\infty\|_{L^{\infty}(\Omega)}\le 1$$
	and
	\begin{equation}
	\label{eq:caux6}
		\llbracket u_\infty\rrbracket_{s,\infty}\le
		\Lambda_{s,\infty}^N.
	\end{equation}
	
	On the other hand,
	\[
		1=\|u_{p_j}\|_{L^{p_ j}(U)}\le|U|^{\frac1{p_j}}
		\|u_{p_j}\|_{L^{\infty}(U)}\qquad\forall j\in\mathbb{N},
	\]
	then $$1\le\|u_\infty\|_{L^{\infty}(U)}.$$ Hence 
	$$\|u_\infty\|_{L^{\infty}(U)}=1$$ and by \eqref{eq:caux6} 
	we get
	\begin{equation}
	\label{eq:caux7}
		\dfrac{\llbracket u_\infty\rrbracket_{s,\infty}}
		{\|u_\infty\|_{L^{\infty}(U)}}\le \Lambda_{s,\infty}^N.
	\end{equation}
	
	Finally, in \cite{EKNT,RoSaint} it was proved that the condition 
	$$\int_U |u_{p_j}|^{p_j-2} u_{p_j}\, dx =0$$ leads to
	$$\sup_{x\in U} u_{\infty}(x) +\inf_{x\in U} u_{\infty}(x)=0$$ 
	in the limit as $p\to +\infty$. Then, 
	using \eqref{eq:caux7}, we have that $u_\infty$ is a minimizer of 
	$\Lambda_{s,\infty}^N.$
	\end{proof}

\end{document}